\documentclass[leqno,a4paper,12pt]{amsart}

\usepackage[T1]{fontenc}
\usepackage{amsmath,amsthm,amssymb,amsfonts,enumerate}
\usepackage{xcolor}
\usepackage[colorlinks,citecolor=blue,linkcolor=blue,urlcolor=blue,
            hypertexnames=false]{hyperref}
\allowdisplaybreaks[4]
\newtheorem{theorem}{Theorem}[section]
\newtheorem{lemma}[theorem]{Lemma}
\newtheorem{proposition}[theorem]{Proposition}
\newtheorem{corollary}[theorem]{Corollary}
\newtheorem{remark}[theorem]{Remark}
\newtheorem{definition}[theorem]{Definition}

\newcommand{\essinf}{\operatorname*{ess\,inf}}
\newcommand{\BLO}{\mathrm{BLO}}
\newcommand{\BMO}{\mathrm{BMO}}
\newcommand{\R}{\mathbb{R}}
\newcommand{\norm}[1]{\left\lVert #1\right\rVert}
\newcommand{\abs}[1]{\left\lvert #1\right\rvert}

\begin{document}

\title[Sharp One-Sided John--Nirenberg Inequality for BLO]
{Sharp constants in the One-sided John--Nirenberg inequality
for functions of bounded lower oscillation}

\thanks{{\it 2020 Mathematics Subject Classification:} 42B25, 42B37}
\keywords{bounded lower oscillation, John--Nirenberg inequality,
sharp constants, {Riesz's rising sun lemma, Bellman function}, stopping intervals}

\author{Chao Zhang}
 \address{{Department of Mathematics, Zhejiang University of Science and
 Technology,\linebreak Hangzhou 310023, P. R. China}}
 \email{zaoyangzhangchao@163.com}
\thanks{The  author is supported  by the National Natural Science Foundation of China (Grant No.  11971431)}

\date{}

\begin{abstract}
{We determine the sharp constants in the one-sided John--Nirenberg inequality
for functions of bounded lower oscillation on an interval.  More precisely,
if $f\in\BLO(I_0)$, then
\[
 \sup_{I\subseteq I_0}\frac{1}{|I|}
 \left|\left\{x\in I:
 f(x)-\essinf_I f>\lambda\right\}\right|
 \le c_1\exp\left(-\frac{c_2\lambda}{\norm{f}_{\BLO(I_0)}}\right),
 \qquad \lambda>0,
\]
with the usual interpretation when \(\|f\|_{\mathrm{BLO}(I_0)}=0\).
The sharp constants are $c_1^*=e$ and $c_2^*=1$.  We give a proof based
on the Riesz rising sun lemma and an independent Bellman function proof.
Finally, we present several consequences of the sharp estimate.}
\end{abstract}
\maketitle

\section{Introduction}\label{sec:introduction}

The space of functions of bounded mean oscillation $(\BMO)$ was introduced by John and Nirenberg~\cite{JN61}, and has played an important role in analysis and probability. A real-valued locally integrable function $f$ defined on $\R^d$ is said to be in $\BMO(\R^d)$,
 if
\begin{equation}\label{eq:BMO-def}
\sup_{Q\subseteq \R^d} \frac{1}{|Q|} \int_Q |f(x) - f_Q| \,dx < \infty,
\end{equation}
where the supremum is over all cubes $Q$ in $\R^d$ with edges parallel to the coordinate
axes, $|Q|$ denotes the volume of $Q$ and
$\displaystyle
f_Q = \frac{1}{|Q|} \int_Q f(x)\,dx
$
is the mean of $f$ over $Q$.   The  bounded lower oscillation ($\BLO$) class first appeared in the paper of Coifman and Rochberg~\cite{CR80}.
A function $f$ belongs to $\BLO(\R^d)$,  if the term $f_Q$ in
\eqref{eq:BMO-def} is replaced by $\displaystyle \essinf_Q f$, the essential infimum of $f$ over cubes $Q$.
That is, $f \in \BLO(\R^d)$ if
\begin{equation}\label{eq:BLO-def}
\sup_Q \left( f_Q - \essinf_Q f \right) < \infty.
\end{equation}
The suprema in \eqref{eq:BMO-def} and \eqref{eq:BLO-def} are denoted by
$\|f\|_{\BMO(\R^d)}$ and $\|f\|_{\BLO(\R^d)}$. In~\cite{CR80}, Coifman and Rochberg proved the decomposition $\BMO = \BLO - \BLO$.
Unlike $\BMO$, the class $\BLO$ is not a linear space, because it is not stable under multiplication by negative numbers.
Furthermore, $\BLO \cap (-\BLO) = L^\infty$, and $\BMO$ is the smallest linear space containing the $\BLO$ class. In~\cite{LYZ}, Liang, Yang and Zhang gave some characterizations of $\BLO$ spaces by heat semigroups.

In this paper, we work mainly in the one-dimensional setting, where cubes are replaced by intervals
$I \subset \R$.
For a fixed interval $I_0$, we  consider the localized seminorms
\[
\|f\|_{\BMO(I_0)} = \sup_{I \subseteq I_0} \frac{1}{|I|} \int_I |f(x) - f_I| \,dx, \qquad
\|f\|_{\BLO(I_0)} = \sup_{I \subseteq I_0} \bigl( f_I - \essinf_I f \bigr).
\]

Functions of bounded mean oscillation satisfy the classical John--Nirenberg exponential
estimate~\cite{JN61}, i.e.,  there exist constants $c_1, c_2 > 0$ such that
\[
\sup_{I \subseteq I_0}\frac{1}{|I|} \bigl| \{x \in I : |f(x) - f_I| > \lambda\} \bigr|
\le c_1 e^{-c_2 \lambda / \|f\|_{\BMO(I_0)}}, \qquad \lambda > 0.
\]
{With this inequality in hand, the $\BMO$ seminorm defined above is equivalent to the $\BMO_p$ seminorm defined by $\displaystyle \norm{f}_{\BMO_p(I_0)}=\sup_{I \subseteq I_0} \bigl(\frac{1}{|I|} \int_I |f(x) - f_I|^p \,dx\bigr)^{1/p}$ for $1\le p<\infty$.}
{Using the Bellman function method, Vasyunin and
Volberg~\cite{VV14} determined the sharp constants for the $L^2$-based
$\BMO$ seminorm: $c_1^*=e^2/4\approx1.847$ and $c_2^*=1$ for big $\lambda$.
For the $L^1$-based $\BMO$ seminorm, Lerner~\cite{Lerner13} obtained the
sharp prefactor $c_1^*=e^{4/e}/2$ using the Riesz rising sun lemma, while
Korenovskii~\cite{Kore} obtained the sharp exponential rate $c_2^*=2/e$.
Slavin and Vasyunin~\cite{SV11} proved the sharp integral-form results.  In 2013,
again using the Bellman function method, Os\c{e}kowski~\cite{Osekowski}
proved that there are absolute constants $c_1=c_2=1$ such that}
\begin{equation*}\label{eq:jn-general}
\sup_{I \subseteq I_0} \frac{1}{|I|} {|\{x\in I : \abs{f(x) -  f_{I}} > \lambda \}|}
\leq c_1 e^{-{{c_2\lambda}/{\|f\|_{\mathrm{BLO}(I_0)}}}}, \qquad \lambda > 0,
\end{equation*}
{for all $f\in\BLO(I_0)$ and all bounded intervals $I_0$, and both
constants are sharp.  The Bellman function method is a powerful tool for
best-constant problems involving $\BMO$ and $\BLO$ functions.}

{Two earlier one-dimensional results are particularly relevant to the
applications considered below.  Passarelli di Napoli~\cite{Passarelli96}
obtained sharp relations between $\BLO_p$ and $\BLO$ quantities and a precise
$A_1$ estimate for exponentials of $\BLO$ functions.  More recently,
Popoli~\cite{Popoli25} identified the exact one-dimensional small-seminorm
threshold for the implication $\exp(f)\in A_1$.  The focus of the present
paper is different: we determine the optimal prefactor and exponential rate
in the weak, one-sided distribution inequality \eqref{eq:JN-BLO}, under the
convention stated below.  The exponential-integrability and moment estimates
in Section~\ref{sec:applications} are recorded only as consequences of that
distribution inequality and are not presented as new sharp $A_1$ or
$\BLO_p$ theorems.}

For BLO functions,  a more appropriate John--Nirenberg inequality is the one-sided John--Nirenberg inequality. This inequality was first established by Wang, Zhou and Teng~\cite{WZT18}: there exist constants $c_1, c_2 > 0$ such that
\begin{equation}\label{eq:JN-BLO}
\sup_{I \subseteq I_0} \frac{1}{|I|}\left|\left\{x \in I : f(x) - \essinf_{y\in I} f(y) > \lambda \right\}\right|
\le c_1 \, e^{-c_2 \lambda / \|f\|_{\BLO(I_0)}} , \qquad \lambda > 0.
\end{equation}
The purpose of this paper is to determine the \emph{sharp} constants in this inequality. Because $c_1$ and $c_2$ can be traded against one another, we use the following
standard convention.  The sharp exponential rate $c_2^*$ is the supremum of
all $c_2$ for which \eqref{eq:JN-BLO} holds with some finite $c_1$. Once
$c_2=c_2^*$ is fixed, $c_1^*$ is the smallest admissible prefactor.

The following is the main result.

\begin{theorem}\label{thm:main}
For the one-dimensional inequality \eqref{eq:JN-BLO},
\[
 c_1^*=e, \qquad c_2^*=1.
\]
Equivalently, for every bounded interval $I_0$, every
$f\in\BLO(I_0)$, every subinterval $I\subseteq I_0$, and every $\lambda>0$,
\begin{equation}\label{eq:main-bound}
 \frac1{|I|}
 \left|\left\{x\in I:
 f(x)-\essinf_I f>\lambda\right\}\right|
 \le
 \min\left\{1,
 \exp\left(1-\frac{\lambda}{\norm{f}_{\BLO(I_0)}}\right)
 \right\},
\end{equation}with the usual interpretation when \(\|f\|_{\mathrm{BLO}(I_0)}=0\).
Both constants are sharp.
\end{theorem}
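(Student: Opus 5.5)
We treat the two halves of the statement separately: the upper bound \eqref{eq:main-bound}, and the sharpness of $c_2^*=1$ and $c_1^*=e$. By homogeneity ($f\mapsto f/t$, $\lambda\mapsto\lambda/t$) we normalise $\norm{f}_{\BLO(I_0)}=1$. The degenerate case $\norm{f}=0$ means $f$ equals $\essinf_I f$ a.e.\ on every $I$, so the left side is $0$. Fix $I\subseteq I_0$ and replace $f$ by $f-\essinf_I f$, so that $f\ge 0$ a.e.\ on $I$, $\essinf_I f=0$ and $f_I\le 1$. Define
\[
\varphi(\lambda)=\sup\frac{|\{x\in I: f(x)-\essinf_I f>\lambda\}|}{|I|},
\]
the supremum taken over all $I_0$, all $f$ with $\BLO(I_0)$-norm at most $1$, and all $I\subseteq I_0$. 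Trivially $\varphi\le 1$, so it suffices to prove $\varphi(\lambda)\le e^{1-\lambda}$ for $\lambda>1$.

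\emph{Recursive inequality via the rising sun lemma.} Fix $\alpha>1$. Since $f_I\le 1<\alpha$, the Riesz rising sun lemma applied to $F(x)=\int_{a}^{x}(f-\alpha)$ on $I=(a,b)$ yields disjoint open intervals $J_k\subseteq I$ with three properties:
\begin{itemize}
\item[(i)] $f_{J_k}=\alpha$;
\item[(ii)] $\{f>\alpha\}\cap I\subseteq\bigcup_k J_k$ up to a null set (Lebesgue differentiation);
\item[(iii)] $\sum_k|J_k|\le \alpha^{-1}\int_I f\le |I|/\alpha$, by (i) and $f\ge0$.
\end{itemize}
On each $J_k$ the BLO condition gives $\essinf_{J_k}f\ge f_{J_k}-1=\alpha-1$. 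Hence, for $\lambda>\alpha$,
\[
\{f>\lambda\}\cap J_k\subseteq\{f-\essinf_{J_k}f>\lambda-\alpha+1\}.
\]
Since $J_k\subseteq I_0$, the definition of $\varphi$ bounds the measure of this set by $\varphi(\lambda-\alpha+1)|J_k|$. Summing over $k$ gives
\[
\varphi(\lambda)\le \alpha^{-1}\varphi(\lambda-\alpha+1).
\]

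\emph{Iteration.} Put $\alpha=1+\delta$ with $\delta>0$, so that $\varphi(\lambda)\le(1+\delta)^{-1}\varphi(\lambda-\delta)$ whenever $\lambda>1+\delta$. Given $\lambda>1$, let $n$ be the largest integer with $\lambda-n\delta>1$, so that $\lambda-(n-1)\delta>1+\delta$ and each step of the iteration is legitimate. Applying the recursion $n$ times and using $\varphi\le1$ at the last step gives
\[
\varphi(\lambda)\le(1+\delta)^{-n},\qquad n\ge(\lambda-1)/\delta-1.
\]
Letting $\delta\to0$ yields $\varphi(\lambda)\le e^{-(\lambda-1)}$, which is \eqref{eq:main-bound}. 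The main technical point is the rising sun step, specifically (ii) and (iii) when $f$ is merely integrable. The covering of $\{f>\alpha\}$ requires the Lebesgue points argument applied to the points that are not ``shadowed''. The endpoint cases of the lemma, where $F$ attains its maximum at an endpoint, must be handled so that $F$ takes equal values at the two endpoints of each $J_k$; this is what gives (i).

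\emph{Sharpness.} For $c_2^*$, take $I_0=(0,1)$ and $f(x)=\log(1/x)$. On $(0,b)$ the mean is $1-\log b$ and the infimum is $-\log b$, giving oscillation $1$; on $(a,b)$ with $a>0$ the oscillation is smaller by concavity of $\log$, so $\norm{f}_{\BLO}=1$. With $I=I_0$ one gets $|\{f>\lambda\}|=e^{-\lambda}$. Hence no rate $c_2>1$ is possible with any finite $c_1$.

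For $c_1^*$, fix $c_2=1$, take $\varepsilon>0$, and let $f=\mathbf 1_{(0,1-\varepsilon)}$ on $I_0=(0,1)$. For any $J\subseteq I_0$ we have $f_J\le 1$ and $\essinf_J f\ge0$, and the oscillation is positive only if $J$ meets both pieces, in which case $\essinf_J f=0$ and $f_J<1$. Thus $\norm{f}_{\BLO}\le1$. For every $\lambda<1$ the distribution on $I_0$ equals $1-\varepsilon$, so any admissible $c_1$ must satisfy
\[
c_1\ge \sup_{\lambda<1}(1-\varepsilon)\,e^{\lambda/\norm{f}}\ge(1-\varepsilon)e^{\lambda}\quad\text{for all }\lambda<1.
\]
Letting $\lambda\to1^-$ and then $\varepsilon\to0$ forces $c_1\ge e$. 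Together with the upper bound, this shows $c_1^*=e$.
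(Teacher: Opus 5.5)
Your sharpness half is correct and is the same as the paper's (the logarithm for $c_2^*=1$; indicators of $(0,1-\varepsilon)$ with $\lambda\uparrow1$ for $c_1^*=e$). Your strategy for the upper bound (rising sun at height $1+\delta$, a one-step recursion, iteration, $\delta\downarrow0$) is also the paper's. However, the recursion step has a genuine gap: the inclusion
\[
\{f>\lambda\}\cap J_k\subseteq\{f-\essinf_{J_k}f>\lambda-\alpha+1\}
\]
goes the wrong way. Put $m_k=\essinf_{J_k}f$. The BLO condition gives only $m_k\ge\alpha-1$. Hence $\{f>\lambda\}\cap J_k=\{f-m_k>\lambda-m_k\}$ with $\lambda-m_k\le\lambda-\alpha+1$. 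So the right-hand set in your display is the \emph{smaller} one, and the points where $\lambda<f\le\lambda+m_k-(\alpha-1)$ are lost. What the definition of $\varphi$ actually gives is the bound $\varphi(\lambda-m_k)|J_k|$, and $\varphi(\lambda-m_k)\ge\varphi(\lambda-\alpha+1)$ because $\varphi$ is nonincreasing.

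This case really occurs. Take $I=(0,1)$ and $f=\tfrac12\bigl(1+\log(p/x)\bigr)\mathbf 1_{(0,p)}$ with $0<p<1$, so that $\norm{f}_{\BLO(I)}=1$, $\essinf_I f=0$ and $f_I=p$. The only stopping interval at height $1+\delta$ is $J=(0,pe^{-2\delta})$, where $\essinf_J f=\tfrac12+\delta>\delta$. Then $\{f>\lambda\}\cap J=(0,pe^{1-2\lambda})$ is $e$ times larger than $\{f-\essinf_J f>\lambda-\delta\}\cap J=(0,pe^{-2\lambda})$.

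The root cause is that $\varphi$, normalized by $\essinf=0$, forgets the mean. On $J_k$ the function $f-m_k$ has mean $\alpha-m_k<1$ whenever $m_k>\alpha-1$. That deficit is what compensates for the lower threshold, but $\varphi$ cannot see it.

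The repair, which is exactly what the paper does, is to define the extremal function over a larger class: pairs $(I,g)$ with $g\ge0$ a.e., $g_I\le1$ and $\norm{g}_{\BLO(I)}\le1$, \emph{without} requiring $\essinf_I g=0$. Your normalized $f$ lies in this class. On each $J_k$ so does $h=f-(\alpha-1)$: it is nonnegative because $m_k\ge\alpha-1$, its mean is exactly $1$, and subtracting a constant and restricting to $J_k$ do not increase the seminorm. Now $\{f>\lambda\}\cap J_k=\{h>\lambda-\alpha+1\}$ is an identity rather than an inclusion. So the recursion $\Phi(\lambda)\le\alpha^{-1}\Phi(\lambda-\alpha+1)$ holds for this $\Phi$, and your iteration goes through verbatim.

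The essential point of the argument is that the admissible class must be invariant under ``restrict to a stopping interval and subtract $\delta$''. The rising-sun technicalities you singled out as the main difficulty are routine by comparison.
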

{The proof is based on a stopping-time recursion.  After normalizing
$\essinf_I f=0$ and $\norm{f}_{\BLO(I_0)}\le1$, we apply the Riesz rising sun
lemma (see Lemma \ref{lem:sunrise}) to $f$ at height $1+\delta$.  On each stopping interval $J$, the BLO
condition implies
\[
 \essinf_J f\ge\delta.
\]
Subtracting $\delta$ therefore produces another admissible normalized
function on $J$.  This gives the recursion
\[
 \Phi(\lambda)\le\frac1{1+\delta}\Phi(\lambda-\delta)
\]
for the extremal distribution function $\Phi$.  Iteration followed by
$\delta\downarrow0$ yields $\Phi(\lambda)\le e^{1-\lambda}$.}

We also give an independent Bellman function proof in
Section~\ref{sec:bellman}.  Besides avoiding the rising-sun decomposition, that
argument retains the local mean excess.  More precisely, when
$\norm{f}_{\BLO(I_0)}>0$, if
\[
 x_I=\frac{f_I-\essinf_I f}{\norm{f}_{\BLO(I_0)}}
 \quad\text{and}\quad
 L=\frac{\lambda}{\norm{f}_{\BLO(I_0)}},
\]
then the Bellman argument gives the stronger estimate
\[
 \frac1{|I|}\left|\{f-\essinf_I f>\lambda\}\right|
 \le
 \begin{cases}
  1, & 0<L\le x_I,\\
  x_I/L, & x_I<L\le1,\\
  x_Ie^{1-L}, & L\ge1.
 \end{cases}
\]
Since $0\le x_I\le1$, this immediately implies \eqref{eq:main-bound}.

{For comparison, the classical one-dimensional two-sided John--Nirenberg
inequality for $\BMO$, with the seminorm displayed above, has sharp
constants $c_1^* = e^{4/e}/2$ and $c_2^*=2/e$; see \cite{Kore, Lerner13}.  The larger exponential rate obtained
here reflects the one-sided geometry of $\BLO$.  On each rising-sun interval of
average $1+\delta$, the $\BLO$ condition gives the pointwise lower bound
$\essinf f\ge\delta$, so subtraction of $\delta$ preserves nonnegativity and
returns the function to the same normalized admissible class.  The two-sided
$\BMO$ condition gives no analogous lower bound: positive and negative
oscillations must be controlled simultaneously, and the resulting sharp
recursion is different.}

The present argument is tied to the interval geometry of the one-dimensional
problem.  For cubes in $\mathbb{R}^d$, a standard Calder\'on--Zygmund
decomposition controls the stopping averages only up to a dimensional factor,
which destroys the exact renormalization used in the recursion.  There are
multidimensional rising-sun decompositions with exact averages for rectangles;
see Korenovskyy, Lerner, and Stokolos~\cite{KLS05}.  Such rectangles need not
be cubes, however, so the usual cube-based $\BLO$ seminorm does not directly
control all stopping sets.  The rectangular result suggests a possible route
for strong, rectangle-based $\BLO$, but the sharp higher-dimensional
cube-based problem requires additional ideas.

{This article is organized as follows.  Section~\ref{sec:ProofSharp}
contains two independent proofs of the sharp one-sided John--Nirenberg
inequality for $\BLO$ functions: one based on the Riesz rising sun lemma and
one based on the Bellman function method.  Section~\ref{sec:sharpness} proves
sharpness using extremizing examples.  Section~\ref{sec:applications} records
several consequences of the sharp estimate.}

\section{Proof of the sharp inequality}\label{sec:ProofSharp}

{In this section, we give two independent proofs: a rising-sun argument and
a Bellman function argument.}
\subsection{{A proof using the Riesz rising sun lemma}}

{In this subsection, we prove the one-sided John--Nirenberg inequality for
$\BLO$ functions using the Riesz rising sun lemma.  We first record some
elementary facts that will be used later.}

\begin{proposition}\label{prop:restriction}
If $J\subseteq I_0$, then
\[
 \norm{f}_{\BLO(J)}\le\norm{f}_{\BLO(I_0)}.
\]
\end{proposition}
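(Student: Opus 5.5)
This follows directly from the definition of the localized seminorm as a supremum over subintervals. By definition,
\[
 \norm{f}_{\BLO(J)}=\sup_{I\subseteq J}\bigl(f_I-\essinf_I f\bigr),
 \qquad
 \norm{f}_{\BLO(I_0)}=\sup_{I\subseteq I_0}\bigl(f_I-\essinf_I f\bigr).
\]
The quantity $f_I-\essinf_I f$ depends only on the values of $f$ on $I$. It does not depend on whether we regard $f$ as a function on $J$ or on $I_0$. So the plan is simply to compare the two index sets of the suprema.

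Fix an arbitrary subinterval $I\subseteq J$. Since $J\subseteq I_0$, transitivity of inclusion gives $I\subseteq I_0$. Hence $I$ is an admissible interval in the supremum defining $\norm{f}_{\BLO(I_0)}$, and therefore
\[
 f_I-\essinf_I f\le \norm{f}_{\BLO(I_0)}.
\]
Taking the supremum over all $I\subseteq J$ gives $\norm{f}_{\BLO(J)}\le\norm{f}_{\BLO(I_0)}$. The family of intervals over which the left supremum is taken is a subfamily of the one on the right, and a supremum over a smaller family cannot be larger.

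There is no real obstacle here. The only points to check are conventions. First, $f_I$ and $\essinf_I f$ must be computed identically in both settings; they are, since both use Lebesgue measure restricted to $I$. Second, the inequality should remain valid if $\norm{f}_{\BLO(I_0)}=+\infty$, which holds trivially. If $f\in\BLO(I_0)$, the bound also shows $f|_J\in\BLO(J)$, which is how the proposition will be used when the recursion passes to rising-sun intervals $J$.
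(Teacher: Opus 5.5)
Your proposal is correct and is exactly the paper's argument: every subinterval of $J$ is also a subinterval of $I_0$, so the supremum defining $\norm{f}_{\BLO(J)}$ runs over a subfamily of the intervals used for $\norm{f}_{\BLO(I_0)}$. Your extra remarks, that $f_I-\essinf_I f$ depends only on $f$ restricted to $I$ and that the infinite case is trivial, are harmless additions.
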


\begin{proof}
Every subinterval of $J$ is also a subinterval of $I_0$, so the assertion
follows immediately from \eqref{eq:BLO-def}.
\end{proof}

\begin{proposition}\label{prop:affine}
Let $a>0$ and $b\in\R$.  Then
\[
 \norm{af+b}_{\BLO(I_0)}=a\norm{f}_{\BLO(I_0)}.
\]
In particular, addition of constants does not change the $\BLO$ seminorm.
\end{proposition}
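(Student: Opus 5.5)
The plan is to verify the identity separately on each subinterval $I\subseteq I_0$ and then take the supremum. The whole argument rests on how the mean and the essential infimum transform under the map $f\mapsto af+b$ with $a>0$.

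First I will check that both quantities transform in the same way. Linearity of the integral gives
\[
 (af+b)_I=\frac1{|I|}\int_I (af(x)+b)\,dx=af_I+b .
\]
For the essential infimum, the map $t\mapsto at+b$ is strictly increasing because $a>0$. Hence for any $c\in\R$ we have $af+b\ge ac+b$ a.e. on $I$ exactly when $f\ge c$ a.e. on $I$. Taking the supremum over such $c$ gives
\[
 \essinf_I(af+b)=a\essinf_I f+b .
\]
This identity also holds in the degenerate case $\essinf_I f=-\infty$, with the convention $a\cdot(-\infty)+b=-\infty$.

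Subtracting the two identities, the constant $b$ cancels and we obtain
\[
 (af+b)_I-\essinf_I(af+b)=a\bigl(f_I-\essinf_I f\bigr).
\]
Taking the supremum over all $I\subseteq I_0$ and using $a>0$ to pull $a$ out of the supremum yields $\norm{af+b}_{\BLO(I_0)}=a\norm{f}_{\BLO(I_0)}$. This holds in $[0,\infty]$, so both sides are infinite at the same time. The special case $a=1$ gives invariance under adding constants.

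The only point that needs any care is the essential-infimum identity, and in particular the use of $a>0$. With $a<0$ the infimum would turn into a supremum, which is precisely why $\BLO$ is not closed under multiplication by negative numbers. Apart from that, every step is routine.
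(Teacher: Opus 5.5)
Your proof is correct and follows the paper's argument: show $(af+b)_I-\essinf_I(af+b)=a\bigl(f_I-\essinf_I f\bigr)$ on each subinterval $I\subseteq I_0$, then take the supremum. Your justification of the essential-infimum identity from the monotonicity of $t\mapsto at+b$, together with the remark on infinite values, fills in details the paper leaves implicit.
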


\begin{proof}
For every interval $I\subseteq I_0$,
\[
 (af+b)_I-\essinf_I(af+b)
 =a\left(f_I-\essinf_I f\right).
\]
Taking the supremum over $I$ proves the claim.
\end{proof}

{We shall use the following one-dimensional form of the Riesz rising sun lemma.}

\begin{lemma}[{Riesz's rising sun lemma}]\label{lem:sunrise}
Let $I$ be a bounded interval, let $g\in L^1(I)$ be real-valued, and let
$a\in\R$ satisfy $g_I\le a$.  Then there is an at most countable family of
pairwise disjoint subintervals $\{I_j\}$ of $I$ such that
\begin{enumerate}
\item[(i)] $g_{I_j}=a$ for every $j$;
\item[(ii)] $g(x)\le a$ for almost every
$x\in I\setminus\bigcup_j I_j$.
\end{enumerate}
Consequently, up to a null set,
\[
 \{x\in I:g(x)>a\}\subseteq\bigcup_j I_j.
\]
\end{lemma}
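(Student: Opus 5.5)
The plan is the classical stopping-interval construction through the continuous function
\[
 G(x)=\int_{\alpha}^{x} (g(t)-a)\,dt ,\qquad I=[\alpha,\beta]\ \text{(endpoints are irrelevant up to null sets)}.
\]
Since $g\in L^1(I)$, $G$ is absolutely continuous on $[\alpha,\beta]$. We have $G(\alpha)=0$, and $G(\beta)=|I|(g_I-a)\le 0$. We will use the ``sun rising from the right'' version: call $x\in(\alpha,\beta)$ \emph{shadowed} if there is $y>x$ in $[\alpha,\beta]$ with $G(y)>G(x)$. Let $E$ be the set of shadowed points.

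First, $E$ is open in $(\alpha,\beta)$ by continuity of $G$. Hence $E$ is an at most countable disjoint union of open intervals $I_j=(a_j,b_j)$. These are the $I_j$ of the lemma, and they are automatically pairwise disjoint subintervals of $I$.

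The key step is to show $G(a_j)=G(b_j)$, which gives $\int_{I_j}(g-a)=0$, i.e.\ (i).

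\begin{itemize}
\item[(1)] $b_j\notin E$ by maximality, unless $b_j=\beta$. In either case $G(b_j)\ge G(y)$ for all $y\ge b_j$ (trivially if $b_j=\beta$).
\item[(2)] For $x\in(a_j,b_j)$ we claim $G(x)\le G(b_j)$. Suppose instead $G(x)>G(b_j)$ and set $m=\max_{[x,b_j]}G$. This maximum exceeds $G(b_j)$, so it is attained at some $z\in[x,b_j)$. Because $z\in E$, some $y>z$ has $G(y)>G(z)$. Since $m$ is the maximum on $[x,b_j]$, necessarily $y>b_j$. But then $G(y)>G(z)>G(b_j)$, which contradicts (1).
\item[(3)] Letting $x\downarrow a_j$ in (2) gives $G(a_j)\le G(b_j)$.
\item[(4)] If $a_j>\alpha$, then $a_j\notin E$, so $G(a_j)\ge G(b_j)$. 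This yields equality.
\item[(5)] If $a_j=\alpha$, the ``$\ge$'' direction needs care, and this is exactly where the hypothesis $g_I\le a$ enters. We have $G(\alpha)=0\ge G(\beta)$, so the concern is $G(b_j)>0$. If $b_j=\beta$, then $G(b_j)=G(\beta)\le 0=G(\alpha)$. If $b_j<\beta$, then $b_j\notin E$, so $G(b_j)\ge G(\beta)$, but this alone does not give $G(b_j)\le 0$.
\end{itemize}

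The clean fix for (5) is to put $\alpha$ into the competition for the maximum. Declare $\alpha$ itself shadowed if some $y>\alpha$ has $G(y)>G(\alpha)$, and rerun the argument on the half-open set. Alternatively, handle the first interval separately:

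\begin{itemize}
\item If $\sup_{[\alpha,\beta]}G>0$, shrink the first interval to $(c,b_j)$ with $c$ the last point in $[\alpha,b_j]$ where $G=G(b_j)$ before the rise. Points of $(\alpha,c)$ then either lie in $E$ with equal-level endpoints, or satisfy $G\ge$ all later values.
\item The simplest route: start from $\alpha$ and note that the sun set $E'=\{x\in[\alpha,\beta): \exists y>x,\ G(y)>G(x)\}$ has components $[\alpha,b)$ or $(a,b)$. For a component of the form $[\alpha,b)$, the previous reasoning gives $G(\alpha)\le G(b)$, and since $b\notin E'$ and $G(\beta)\le G(\alpha)$, we get $G(b)\ge G(\beta)$.
\end{itemize}

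I expect this endpoint bookkeeping at $\alpha$ to be the main obstacle. The honest resolution is to verify the equal-level claim $G(a_j)=G(b_j)$ for a component starting at $\alpha$ directly from $G(\beta)\le G(\alpha)$ together with the maximality argument above.

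Finally, (ii) comes from the complement. For $x\in(\alpha,\beta)\setminus\bigcup_j I_j$ we have $G(y)\le G(x)$ for all $y>x$. At Lebesgue points $x$ of $g$ (almost every $x$), $G'(x)=g(x)-a$, and then
\[
 g(x)-a=\lim_{h\downarrow0}\frac{G(x+h)-G(x)}{h}\le 0 .
\]
So $g\le a$ a.e.\ off $\bigcup_j I_j$. The stated inclusion $\{g>a\}\subseteq\bigcup_j I_j$ up to a null set is then immediate.
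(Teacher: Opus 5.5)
\paragraph{There is a genuine gap at step (5).} For a component of $E$ with left endpoint $\alpha$, the equal-level claim is not just unproved but false, so none of your patches can work. Take $I=[0,3]$, $a=0$, $g=\mathbf 1_{[0,1)}-\mathbf 1_{[1,3]}$. Then $g_I=-1/3\le a$, $G(x)=x$ on $[0,1]$ and $G(x)=2-x$ on $[1,3]$. So $E=(0,1)$, but $g_{(0,1)}=1\ne a$.

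In general, your steps (1)--(2) show that the first component $(\alpha,b)$ has $G(b)=\max_{[\alpha,\beta]}G$. The hypothesis $G(\beta)\le G(\alpha)$ says nothing about this maximum. Your three proposed patches each fail:
\begin{itemize}
\item Declaring $\alpha$ shadowed gives the component $[\alpha,b)$, with the same defect.
\item Your ``simplest route'' only yields $G(b)\ge G(\beta)$, not the needed $G(b)\le G(\alpha)$.
\item Shrinking is impossible. When $G(b)>G(\alpha)$ we have $G<G(b)$ on $[\alpha,b)$, since a point of $(\alpha,b)$ at the maximal level could not be shadowed. So the point $c$ you describe does not exist; in the example every subinterval of $(0,1)$ has average $1$.
\end{itemize}
Since $\{g>a\}$ near $\alpha$ must stay covered, the first interval has to be \emph{enlarged} to the right, not shrunk.

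\paragraph{The missing step.} Let $d=\sup\{x\in[\alpha,\beta]:G(x)>G(\alpha)\}$, with $d=\alpha$ if this set is empty. Because $G(\beta)\le G(\alpha)$, continuity gives $G(d)=G(\alpha)$ and $G\le G(d)$ on $[d,\beta]$. This is where the hypothesis $g_I\le a$ is genuinely used.
\begin{itemize}
\item When $d>\alpha$: either $d=\beta$ or $d\notin E$. Hence every component of $E$ meeting $(d,\beta)$ lies in $(d,\beta)$ and has left endpoint $>\alpha$, so your step (4) gives it equal endpoint values.
\item When $d=\alpha$: $G\le G(\alpha)$ on all of $[\alpha,\beta]$, so the first component satisfies $G(b)\le G(\alpha)$, and your step (3) gives equality.
\end{itemize}
Take $(\alpha,d)$ (when $d>\alpha$) together with the components of $E$ inside $(d,\beta)$. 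In the example this is the single interval $(0,2)$. The complement of this family lies in $\{\alpha\}\cup([d,\beta]\setminus E)$. Your Lebesgue-point argument for (ii), which is correct, therefore applies unchanged, as do your steps (1)--(4).

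\paragraph{Comparison with the paper.} The paper never forms a global shadow set. It bisects active intervals (those with average $<a$). When one half has average $\ge a$, it slides the cut point until the larger piece has average exactly $a$, leaving an active remainder of at most half the length. The hypothesis is thus used locally at every step, and the endpoint issue never arises.
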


\begin{proof}
{This is the classical rising sun lemma of Riesz~\cite{Riesz32}.  We include a
stopping-time construction  here.  If $g_I=a$, take $I_1=I$.  Otherwise $g_I<a$,
and we call $I$ active.  Given an active interval $J$, bisect it.  If both
halves have average less than $a$, retain both halves as active intervals.  If
one half has average at least $a$, the other has average less than $a$ because
$g_J<a$.  By continuity of the indefinite integral, move the bisection point
into the lower-average half until the larger of the two resulting intervals,
say $K$, satisfies $g_K=a$.  Put $K$ into the stopping family and retain the
complementary interval as active.  The retained interval has length at most
$|J|/2$ and average less than $a$.

Iterating this procedure produces pairwise disjoint stopping intervals
$\{I_j\}$ satisfying (i).  If a point does not belong to their union, then it
lies in a nested sequence of active intervals whose lengths tend to zero and
whose averages are less than $a$.  The Lebesgue differentiation theorem gives
$g(x)\le a$ at almost every such point, proving (ii).  The final inclusion is
immediate.}
\end{proof}

The normalization needed for the stopping argument is slightly more flexible
than $\essinf_I f=0$.

\begin{definition}\label{def:class}
Let $\mathcal A$ be the collection of all pairs $(I,g)$ such that $I$ is a
bounded interval and
\begin{equation}\label{eq:admissible}
 g\ge0\ \text{a.e. on }I,
 \qquad
 g_I\le1,
 \qquad
 \norm{g}_{\BLO(I)}\le1.
\end{equation}
For $\lambda\ge0$, define the distribution function as
\begin{equation}\label{eq:Phi}
 \Phi(\lambda)
 :=\sup_{(I,g)\in\mathcal A}
 \frac1{|I|}\abs{\{x\in I:g(x)>\lambda\}}.
\end{equation}
Clearly $0\le\Phi(\lambda)\le1$.
\end{definition}

{We first establish a recursive inequality for $\Phi$.}
\begin{lemma}\label{lem:recursion}
For every $\delta>0$ and every $\lambda\ge1+\delta$,
\begin{equation}\label{eq:recursion}
 \Phi(\lambda)
 \le\frac1{1+\delta}\Phi(\lambda-\delta).
\end{equation}
\end{lemma}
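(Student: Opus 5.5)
Fix $\delta>0$, $\lambda\ge1+\delta$, and an admissible pair $(I,g)\in\mathcal A$. It suffices to show
\[
\frac1{|I|}\abs{\{x\in I:g>\lambda\}}\le\frac1{1+\delta}\Phi(\lambda-\delta),
\]
since we can then take the supremum over $\mathcal A$. The plan is to apply Lemma~\ref{lem:sunrise} to $g$ on $I$ at height $a=1+\delta$. This is allowed because $g_I\le1<a$. The lemma gives disjoint intervals $I_j\subseteq I$ with $g_{I_j}=1+\delta$. Since $\lambda\ge1+\delta$, up to a null set we have $\{g>\lambda\}\subseteq\{g>1+\delta\}\subseteq\bigcup_j I_j$. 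Hence
\[
\abs{\{x\in I:g>\lambda\}}\le\sum_j\abs{\{x\in I_j:g>\lambda\}}.
\]

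Next we bound the total length of the stopping intervals. From $g\ge0$ and $g_{I_j}=1+\delta$ we get
\[
(1+\delta)\sum_j|I_j|=\sum_j\int_{I_j}g\le\int_I g\le|I|,
\]
so $\sum_j|I_j|\le|I|/(1+\delta)$.

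The key step is to renormalize $g$ on each $I_j$. By Proposition~\ref{prop:restriction}, $\norm{g}_{\BLO(I_j)}\le\norm{g}_{\BLO(I)}\le1$. Therefore
\[
g_{I_j}-\essinf_{I_j}g\le1,\qquad\text{so}\qquad \essinf_{I_j}g\ge\delta.
\]
Put $h_j=g-\delta$ on $I_j$. Then $h_j\ge0$ a.e., and $(h_j)_{I_j}=1$. By Proposition~\ref{prop:affine}, $\norm{h_j}_{\BLO(I_j)}=\norm{g}_{\BLO(I_j)}\le1$. Thus $(I_j,h_j)\in\mathcal A$. Since $\{g>\lambda\}\cap I_j=\{h_j>\lambda-\delta\}$ and $\lambda-\delta\ge1\ge0$, the definition of $\Phi$ gives
\[
\abs{\{x\in I_j:g>\lambda\}}\le\Phi(\lambda-\delta)\,|I_j|.
\]

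Summing over $j$ and using the length bound yields
\[
\abs{\{x\in I:g>\lambda\}}\le\Phi(\lambda-\delta)\sum_j|I_j|\le\frac{|I|}{1+\delta}\,\Phi(\lambda-\delta),
\]
which is the claim.

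The step needing most care is the renormalization. One has to check that the $\BLO$ bound passes to the subintervals, which is Proposition~\ref{prop:restriction}, and that it forces the pointwise lower bound $\essinf_{I_j}g\ge\delta$ with exactly the constant needed. This is why the admissible class allows $g_I\le1$ rather than requiring $\essinf_I g=0$. One should also note that the rising-sun inclusion holds only up to a null set, which does not affect any of the measures above.
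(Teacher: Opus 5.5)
Your proof is correct and follows the paper's argument essentially step for step: the rising sun lemma at height $1+\delta$, the length bound $\sum_j|I_j|\le|I|/(1+\delta)$ from $g\ge0$, the lower bound $\essinf_{I_j}g\ge\delta$ from the $\BLO$ condition, and the renormalization $h_j=g-\delta$ back into $\mathcal A$. Your explicit check that $\lambda-\delta\ge1\ge0$, so that $\Phi$ is evaluated inside its domain, is a small detail the paper's proof of this lemma leaves implicit.
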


\begin{proof}
Fix $(I,g)\in\mathcal A$ and apply Lemma~\ref{lem:sunrise} to $g$ at the
height
\[
 a:=1+\delta.
\]
This is legitimate because $g_I\le1<a$.  Let $\{I_j\}$ be the resulting
stopping intervals.  Since $g\ge0$ and $g_{I_j}=a$, we have that
\begin{equation}\label{eq:length-stopping}
 a\sum_j|I_j|
 =\sum_j\int_{I_j}g
 \le\int_I g
 \le|I|.
\end{equation}
By the BLO condition on each $I_j$,
\[
 a-\essinf_{I_j}g
 =g_{I_j}-\essinf_{I_j}g
 \le1.
\]
Hence
\begin{equation}\label{eq:lower-on-stopping}
 \essinf_{I_j}g\ge a-1=\delta.
\end{equation}
On $I_j$, define
\[
 h_j=g-\delta.
\]
Then $h_j\ge0$ almost everywhere,
\[
 (h_j)_{I_j}=a-\delta=1,
 \qquad
 \norm{h_j}_{\BLO(I_j)}=\norm{g}_{\BLO(I_j)}\le1,
\]
so $(I_j,h_j)\in\mathcal A$.

Since $\lambda\ge a$, Lemma~\ref{lem:sunrise} implies, up to a null set,
\[
 \{g>\lambda\}\subseteq\bigcup_j I_j.
\]
Moreover,
\[
 \{x\in I_j:g(x)>\lambda\}
 =\{x\in I_j:h_j(x)>\lambda-\delta\}.
\]
It follows from the definition of $\Phi$ and \eqref{eq:length-stopping} that
\begin{align*}
 |\{g>\lambda\}|
 &\le\sum_j
 |\{x\in I_j:h_j(x)>\lambda-\delta\}|\\
 &\le\Phi(\lambda-\delta)\sum_j|I_j|\\
 &\le\frac{|I|}{1+\delta}\Phi(\lambda-\delta).
\end{align*}
Dividing by $|I|$ and taking the supremum over $(I,g)\in\mathcal A$,   we complete the proof.
\end{proof}

{With Lemma~\ref{lem:recursion} in hand, we obtain the following
distribution estimate.}
\begin{theorem}\label{thm:normalized}
For every $(I,g)\in\mathcal A$ and every $\lambda\ge0$,
\begin{equation}\label{eq:normalized-tail}
 \frac1{|I|}|\{x\in I:g(x)>\lambda\}|
 \le\min\{1,e^{1-\lambda}\}.
\end{equation}
\end{theorem}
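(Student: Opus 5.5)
The plan is to prove the equivalent bound $\Phi(\lambda)\le\min\{1,e^{1-\lambda}\}$ for all $\lambda\ge0$; taking the supremum in \eqref{eq:Phi} then gives \eqref{eq:normalized-tail}.

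The bound $\Phi\le1$ is trivial, and for $0\le\lambda\le1$ we have $e^{1-\lambda}\ge1$, so only $\lambda>1$ needs work. A useful preliminary is that $\Phi$ is nonincreasing, since the level sets shrink as $\lambda$ grows. We also have the Chebyshev bound $\Phi(\mu)\le 1/\mu$ for $\mu>0$, because $g\ge0$ and $g_I\le1$.

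Fix $\lambda>1$ and an integer $n\ge1$, and set $\delta=(\lambda-1)/n$. For $k=0,\dots,n-1$ the point $\lambda-k\delta$ is at least $\lambda-(n-1)\delta=1+\delta$, so Lemma~\ref{lem:recursion} applies at each of these points. Iterating it $n$ times gives
\[
 \Phi(\lambda)\le(1+\delta)^{-n}\,\Phi(\lambda-n\delta)=(1+\delta)^{-n}\Phi(1)\le\Bigl(1+\frac{\lambda-1}{n}\Bigr)^{-n},
\]
using $\Phi(1)\le1$. Letting $n\to\infty$, the right-hand side tends to $e^{-(\lambda-1)}=e^{1-\lambda}$. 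Since $\Phi(\lambda)$ does not depend on $n$, this yields $\Phi(\lambda)\le e^{1-\lambda}$.

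The only delicate point is the bookkeeping in the recursion. Each application of Lemma~\ref{lem:recursion} requires the current argument to be at least $1+\delta$, and the chain must stop exactly at argument $1$, where the trivial bound $\Phi\le1$ is used. This is why the step is chosen as $\delta=(\lambda-1)/n$ rather than an arbitrary $\delta$: with any other step one would have to control a leftover remainder using monotonicity of $\Phi$, which costs an extra factor. With this choice the final limit is the standard one, $(1+x/n)^{-n}\to e^{-x}$, so I expect no further obstacle.
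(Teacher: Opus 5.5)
Your proof is correct and is essentially the paper's argument: iterate Lemma~\ref{lem:recursion} down to level $1$, bound $\Phi$ by $1$ there, and let the step size go to zero so that $(1+\delta)^{-n}\to e^{1-\lambda}$. The only difference is that you take $\delta=(\lambda-1)/n$ so the chain ends exactly at $1$, whereas the paper allows arbitrary $\delta$ with $n_\delta=\lfloor(\lambda-1)/\delta\rfloor$ and a terminal level in $[1,1+\delta)$. That remainder costs nothing in the limit: only $\Phi\le1$ is used there, not monotonicity, and $n_\delta\delta\to\lambda-1$. So your remark about an extra factor is unfounded, though it does not affect your proof.
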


\begin{proof}
We consider the case    $\lambda>1$ first.  Fix
$0<\delta<\lambda-1$ and set
\[
 n_\delta=\left\lfloor\frac{\lambda-1}{\delta}\right\rfloor.
\]
Then
\begin{equation}\label{eq:terminal-level}
 1\le\lambda-n_\delta\delta<1+\delta.
\end{equation}
For each $k=0,\ldots,n_\delta-1$, equation
\eqref{eq:terminal-level} yields
\[
 \lambda-k\delta
 =\lambda-n_\delta\delta+(n_\delta-k)\delta
 \ge1+\delta.
\]
Thus every one of the $n_\delta$ applications of
Lemma~\ref{lem:recursion} is legitimate, and the remaining argument of
$\Phi$ stays in its domain $[0,\infty)$.  Consequently,
\[
 \Phi(\lambda)
 \le(1+\delta)^{-n_\delta}
 \Phi(\lambda-n_\delta\delta)
 \le(1+\delta)^{-n_\delta},
\]
where the last inequality uses $\Phi\le1$.  Since
\[
 n_\delta\delta\longrightarrow\lambda-1
 \quad\text{and}\quad
 \frac{\log(1+\delta)}{\delta}\longrightarrow1
 \qquad(\delta\downarrow0),
\]
we obtain
\[
 \Phi(\lambda)\le e^{-(\lambda-1)}=e^{1-\lambda}.
\]
For $0\le\lambda\le1$, the estimate follows from $\Phi(\lambda)\le1$.
\end{proof}

{We can now prove the one-sided John--Nirenberg inequality for $\BLO$
functions using the Riesz rising sun lemma.}
\begin{theorem}\label{thm:upper}
Let $I_0$ be a bounded interval and let $f\in\BLO(I_0)$.  Then, for every
subinterval $I\subseteq I_0$ and every $\lambda>0$,
\[
 \frac1{|I|}
 \left|\left\{x\in I:
 f(x)-\essinf_I f>\lambda\right\}\right|
 \le
 \min\left\{1,
 \exp\left(1-\frac{\lambda}{\norm{f}_{\BLO(I_0)}}\right)
 \right\},
\]
with the usual interpretation when $\norm{f}_{\BLO(I_0)}=0$.
\end{theorem}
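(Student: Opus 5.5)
The plan is to reduce Theorem~\ref{thm:upper} to the normalized estimate of Theorem~\ref{thm:normalized} by translation and scaling.

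\textbf{Degenerate case.} First I dispose of $\norm{f}_{\BLO(I_0)}=0$. The ``usual interpretation'' reads the right-hand side as $0$ for every $\lambda>0$. By Proposition~\ref{prop:restriction}, $\norm{f}_{\BLO(I)}=0$, so
\[
 f_I=\essinf_I f .
\]
Since $f-\essinf_I f\ge0$ a.e.\ on $I$ and this function has zero mean over $I$, it vanishes a.e.\ on $I$. Hence the level set is null and the bound holds.

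\textbf{Normalization.} Now assume $\norm{f}_{\BLO(I_0)}=:N\in(0,\infty)$ and fix $I\subseteq I_0$. Then $\essinf_I f$ is finite: $f\in L^1(I)$, and $f_I-\essinf_I f\le N$ rules out $-\infty$. Define
\[
 g=\frac{f-\essinf_I f}{N}\quad\text{on } I.
\]
I then check $(I,g)\in\mathcal A$ as follows.
\begin{enumerate}
\item[(a)] $g\ge0$ a.e.\ on $I$, by the definition of the essential infimum.
\item[(b)] $g_I=(f_I-\essinf_I f)/N\le1$, by the definition of $\norm{f}_{\BLO(I_0)}$ applied to the interval $I$ itself.
\item[(c)] Proposition~\ref{prop:affine} with $a=1/N$ gives $\norm{g}_{\BLO(I)}=\norm{f}_{\BLO(I)}/N$, and Proposition~\ref{prop:restriction} bounds this by $1$.
\end{enumerate}

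\textbf{Conclusion.} Since
\[
 \{x\in I: f(x)-\essinf_I f>\lambda\}=\{x\in I: g(x)>\lambda/N\},
\]
Theorem~\ref{thm:normalized} applied at level $\lambda/N$ yields exactly $\min\{1,e^{1-\lambda/N}\}$.

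There is no real obstacle here. The only point needing care is finiteness of $\essinf_I f$, so that the translation is legitimate, together with the degenerate case; both are handled above.
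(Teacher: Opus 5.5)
Your proposal is correct and matches the paper's proof: you dispose of the degenerate case, normalize to $g=(f-\essinf_I f)/\norm{f}_{\BLO(I_0)}$, check $(I,g)\in\mathcal A$ via Propositions~\ref{prop:restriction} and~\ref{prop:affine}, and apply Theorem~\ref{thm:normalized} at level $\lambda/\norm{f}_{\BLO(I_0)}$. Your remark that $\essinf_I f$ is finite is a point the paper leaves implicit.
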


\begin{proof}
Write $B:=\norm{f}_{\BLO(I_0)}$.  If $B=0$, then
$f=\essinf_I f$ almost everywhere on every subinterval $I$, and the left-hand
side is zero. So, the inequality is established.

Assume $B>0$, fix $I\subseteq I_0$, and define
\[
 g=\frac{f-\essinf_I f}{B}\quad\text{on }I.
\]
Then $g\ge0$ almost everywhere.  Proposition~\ref{prop:restriction} gives
$\norm{g}_{\BLO(I)}\le1$, and
\[
 g_I=\frac{f_I-\essinf_I f}{B}\le1.
\]
Thus $(I,g)\in\mathcal A$.  Theorem~\ref{thm:normalized}, applied at the
level $\lambda/B$, gives the assertion.
\end{proof}

In particular, Theorem~\ref{thm:upper} proves that the pair
\[
 (c_1,c_2)=(e,1)
\]
is admissible in \eqref{eq:JN-BLO}.

\subsection{{A Bellman function proof}}\label{sec:bellman}

{In this subsection we give a second proof of Theorem~\ref{thm:upper},
independent of the Riesz rising sun lemma.  We use the Bellman function
method.  The argument also gives a refinement of \eqref{eq:main-bound} that
depends on the local mean excess.}

For $t>0$, define
\begin{equation*}\label{eq:bellman-q}
 q(t)=
 \begin{cases}
  t, & 0<t\le1,\\
  e^{t-1}, & t\ge1.
 \end{cases}
\end{equation*}
{Let
\[
 \overline{\mathcal D}
 =\{(z,y)\in\mathbb R^2:-1\le y\le z\le y+1\}.
\]
For a fixed $a\in\mathbb R$, consider the function
$U_a:\overline{\mathcal D}\to[0,1]$
given by
\begin{equation}\label{eq:bellman-candidate}
 U_a(z,y)=
 \begin{cases}
  1, & a\le y,\\[2mm]
  \displaystyle
  \min\left\{1,\frac{z-y}{q(a-y)}\right\}, & a>y.
 \end{cases}
\end{equation}
The variables $z$ and $y$ represent, respectively, the average and the
essential infimum of a function on a subinterval.  On the top edge
$z=y+1$, formula~\eqref{eq:bellman-candidate} is the continuous extension in
the $z$ variable of the same formula on $z<y+1$.}

\begin{lemma}[Bellman induction]\label{lem:bellman-induction}
Let $I$ be a bounded interval and let $h\in\BLO(I)$ satisfy
\[
 h_I=0,
 \qquad \norm{h}_{\BLO(I)}\le1.
\]
If $m=\essinf_I h$, then, for every $a\in\mathbb R$,
\begin{equation}\label{eq:bellman-tail}
 \frac1{|I|}\left|\{x\in I:h(x)>a\}\right|
 \le U_a(0,m).
\end{equation}
\end{lemma}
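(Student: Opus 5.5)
We argue by cases on the position of $a$ relative to $m$, after shifting so that the infimum sits at $0$. Put $m=\essinf_I h$. Since $h_I=0$ and $\norm{h}_{\BLO(I)}\le1$, we have $-1\le m\le0$, so $(0,m)\in\overline{\mathcal D}$.

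\emph{Case $a\le m$.} Here $U_a(0,m)=1$ and \eqref{eq:bellman-tail} is trivial.

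\emph{Case $a>m$.} Set $g=h-m$ on $I$, with $x:=g_I=-m\in[0,1]$ and $L:=a-m>0$. By Proposition~\ref{prop:affine}, $g\ge0$ a.e. and $\norm{g}_{\BLO(I)}\le1$. Moreover $\{h>a\}=\{g>L\}$ and $U_a(0,m)=\min\{1,x/q(L)\}$. So it suffices to prove, for every such $g$,
\[
 \frac1{|I|}|\{g>L\}|\le \frac{x}{q(L)} .
\]
Note that $x/q(L)\le 1$ when $L\ge x$, and the bound $1$ is trivial otherwise. Define
\[
 \Psi(x,L)=\sup\frac1{|I|}|\{g>L\}|,
\]
the supremum being over pairs $(I,g)\in\mathcal A$ with $g_I=x$. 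We must show $\Psi(x,L)\le x/q(L)$.

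\emph{Subcase $0<L\le1$.} Chebyshev's inequality gives $|\{g>L\}|\le L^{-1}\int_I g=x|I|/L$, which is exactly $x/q(L)$.

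\emph{Subcase $L>1$.} The goal is $\Psi(x,L)\le xe^{1-L}$; the plan is to reproduce the recursion of Lemma~\ref{lem:recursion} while keeping track of the mean. Fix $\delta\in(0,L-1]$. Apply Lemma~\ref{lem:sunrise} to $g$ at height $1+\delta\ge x$, obtaining stopping intervals $I_j$ with $g_{I_j}=1+\delta$. Then:
\begin{itemize}
\item Since $g\ge0$, we get $(1+\delta)\sum_j|I_j|\le\int_I g=x|I|$.
\item The $\BLO$ condition on each $I_j$ gives $\essinf_{I_j}g\ge\delta$.
\item Hence $h_j:=g-\delta$ satisfies $(I_j,h_j)\in\mathcal A$ with mean exactly $1$.
\item Up to a null set, $\{g>L\}\subseteq\bigcup_j I_j$.
\end{itemize}
Together these give
\[
 \Psi(x,L)\le\frac{x}{1+\delta}\,\Psi(1,L-\delta).
\]
It remains to bound $\Psi(1,\cdot)$. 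The same recursion with $x=1$ is just Lemma~\ref{lem:recursion}, so Theorem~\ref{thm:normalized} gives $\Psi(1,L-\delta)\le\Phi(L-\delta)\le e^{1-L+\delta}$. Therefore $\Psi(x,L)\le x\,e^{1-L}\,e^{\delta}/(1+\delta)$, and letting $\delta\downarrow0$ yields $\Psi(x,L)\le xe^{1-L}$. The case $L=1$ is already covered by Chebyshev. This completes the proof of \eqref{eq:bellman-tail}.

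\emph{Main obstacle.} The delicate point is making the argument genuinely ``Bellman-style,'' as the subsection advertises, rather than re-invoking the rising-sun stopping. The intended independent route is to verify that $U_a$ satisfies a splitting inequality: if $I=I_-\cup I_+$ with data $(z_\pm,y_\pm)$ in $\overline{\mathcal D}$, $y=\min(y_-,y_+)$ and $z$ the weighted average of $z_\pm$, then
\[
 U_a(z,y)\ge\alpha_-U_a(z_-,y_-)+\alpha_+U_a(z_+,y_+).
\]
One would then iterate over dyadic-type splittings, which keep every pair in $\overline{\mathcal D}$ by the $\BLO$ bound, and pass to the limit using Lebesgue differentiation, using $U_a(z,z)=\mathbf 1_{z>a}$ in the appropriate a.e. sense on the boundary. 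The hard part is the concavity check across the junction $a-y=1$, where $q$ switches from linear to exponential, together with the regime where one half's infimum is the global one. The exponential form $e^{t-1}$ is exactly what makes the inequality tight under infinitesimal splittings along $z=y+1$. If that check becomes unwieldy, the stopping argument above still yields the lemma from results already established.
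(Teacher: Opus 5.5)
Your proof is correct, but it takes a different route from the paper.

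\textbf{Your route.} After shifting to $g=h-m\ge0$ with $g_I=x=-m\in[0,1]$ and $L=a-m$, you get the bound $x/L$ for $L\le1$ from Chebyshev's inequality. For $L>1$ you perform a single rising-sun step at height $1+\delta$:
\begin{itemize}
\item the stopping intervals have total length at most $x|I|/(1+\delta)$, which is where the mean is retained;
\item the pieces $g-\delta$ are admissible with mean $1$;
\item Theorem~\ref{thm:normalized} bounds their relative tails by $e^{1-L+\delta}$.
\end{itemize}
Letting $\delta\downarrow0$ then gives $xe^{1-L}$.

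\textbf{The paper's route.} The paper does not use the rising-sun lemma here at all. It verifies the Bellman properties of $U_a$: the obstacle inequality on the diagonal, concavity in $z$ for fixed $y$, and monotonicity in $y$. It propagates these along a balanced splitting whose children satisfy $h_{J_\pm}<\essinf_J h+1$ and have relative lengths in $[\eta,1-\eta]$; such a splitting exists when $\norm{h}_{\BLO(I)}<1-\eta$. It then passes to the limit via Lebesgue differentiation. Finally it reaches $\norm{h}_{\BLO(I)}\le1$, including the edge $m=-1$, by applying the strict case to $\kappa h$ and letting $\kappa\uparrow1$.

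\textbf{Comparison.} Your argument is shorter and needs no approximation at the top edge. It also shows that the refined bound is a cheap corollary of the rising-sun estimate plus Chebyshev. The price is that it rests on Lemma~\ref{lem:sunrise} and Theorem~\ref{thm:normalized}. So it cannot serve as the rising-sun-free second proof of Theorem~\ref{thm:upper} that this subsection is meant to supply.

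\textbf{Your closing sketch.} Plain dyadic splits would not suffice for the Bellman route. The step from $U_a(z_{J_\pm},y_J)$ to $U_a(z_{J_\pm},y_{J_\pm})$ needs $z_{J_\pm}\le y_J+1$, since beyond the top edge the formula can increase in $y$. The $\BLO$ bound does not guarantee this for the midpoint split: for $h(t)=-\log(1-t)$ on $(0,1)$, the right half exceeds $\essinf_{(0,1)}h$ by $1+\log2$ on average. This is precisely why the paper moves the dividing point. By contrast, the junction $a-y=1$ causes no difficulty, because concavity is needed only in $z$.
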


\begin{proof}
{We first verify the Bellman properties of $U_a$.  On the diagonal,
\begin{equation}\label{eq:bellman-obstacle}
 U_a(z,z)\ge \mathbf 1_{\{z>a\}}.
\end{equation}
For each fixed $y$, the function $z\mapsto U_a(z,y)$ is concave on
$[y,y+1]$: it is linear until it reaches the value $1$ and is constant
thereafter.

For each fixed $z$, the function $y\mapsto U_a(z,y)$ is nonincreasing on its
domain.  It is enough to check this where $a>y$ and $U_a(z,y)<1$.  If
$a-y\ge1$, then
\[
 U_a(z,y)=(z-y)e^{1-a+y}
\]
and hence
\[
 \frac{\partial U_a}{\partial y}(z,y)
 =e^{1-a+y}(z-y-1)\le0.
\]
If $0<a-y\le1$, then
\[
 U_a(z,y)=\frac{z-y}{a-y}.
\]
The inequality $U_a(z,y)<1$ implies $z<a$, and consequently
\[
 \frac{\partial U_a}{\partial y}(z,y)
 =\frac{z-a}{(a-y)^2}\le0.
\]
The formulas agree at $a-y=1$ and at the boundary of the region where the
minimum in \eqref{eq:bellman-candidate} equals $1$.  Thus
\begin{equation}\label{eq:bellman-monotone}
 y_1\le y_2\quad\Longrightarrow\quad
 U_a(z,y_1)\ge U_a(z,y_2).
\end{equation}
The same conclusions hold on the boundary of
$\overline{\mathcal D}$ by the one-sided limits in the relevant variable.}

{Assume first that $\norm{h}_{\BLO(I)}<1$.  Choose
$\eta\in(0,1/2)$ such that
$\norm{h}_{\BLO(I)}<1-\eta$.  We use the elementary balanced splitting lemma
for $\BLO$ functions: every subinterval $J$ can be divided into two adjacent
subintervals $J_-$ and $J_+$ such that
\begin{equation}\label{eq:bellman-balanced-split}
 \eta\le\frac{|J_\pm|}{|J|}\le1-\eta,
 \qquad
 h_{J_\pm}<\essinf_Jh+1.
\end{equation}
Here is the weighted argument.  Set
\[
 \alpha_\pm=\frac{|J_\pm|}{|J|},
 \qquad
 x_\pm=h_{J_\pm}-\essinf_Jh.
\]
For every dividing point,
\begin{equation}\label{eq:bellman-weighted-identity}
 \alpha_-x_-+\alpha_+x_+
 =h_J-\essinf_Jh<1-\eta.
\end{equation}
Start at the midpoint of $J$.  If $x_-<1$ and $x_+<1$, no adjustment is
needed.  Otherwise exactly one of the two values can be at least $1$ by
\eqref{eq:bellman-weighted-identity}.  Move the dividing point so as to shrink
the child having the smaller $x$-value.  The two averages, and hence
$x_-$ and $x_+$, depend continuously on the dividing point as long as both
children have positive length.  If their values cross, then at the crossing
they both equal $h_J-\essinf_Jh<1$.  If they do not cross before the smaller
child has relative length $\eta$, the larger child has relative length
$1-\eta$ and still has $x\ge1$.  Since both $x$-values are nonnegative, this
would make the left-hand side of
\eqref{eq:bellman-weighted-identity} at least $1-\eta$, a contradiction.
Thus the dividing point can be chosen while both relative lengths lie in
$[\eta,1-\eta]$ and both $x$-values are strictly smaller than $1$, which is
exactly \eqref{eq:bellman-balanced-split}.  This is the splitting lemma used
in the $\BLO$ Bellman method; see \cite[Lemma~2.1]{Osekowski}.}

For a subinterval $J$, write
\[
 z_J=h_J,
 \qquad y_J=\essinf_Jh.
\]
If $\alpha_\pm=|J_\pm|/|J|$, concavity in the first variable and
\eqref{eq:bellman-balanced-split} give
\[
 U_a(z_J,y_J)
 \ge
 \alpha_-U_a(z_{J_-},y_J)
 +\alpha_+U_a(z_{J_+},y_J).
\]
Since $y_{J_\pm}\ge y_J$, the monotonicity
\eqref{eq:bellman-monotone} yields
\begin{equation}\label{eq:bellman-dynamic}
 U_a(z_J,y_J)
 \ge
 \alpha_-U_a(z_{J_-},y_{J_-})
 +\alpha_+U_a(z_{J_+},y_{J_+}).
\end{equation}

Iterate this construction.  The balance condition in
\eqref{eq:bellman-balanced-split} implies that the lengths of the resulting
intervals tend uniformly to zero.  Let $h_n$ be the step function which is
equal to $h_J$ on every interval $J$ of the $n$th generation.  Iterating
\eqref{eq:bellman-dynamic}, using $y_J\le z_J$, and then applying
\eqref{eq:bellman-monotone} and \eqref{eq:bellman-obstacle}, we obtain
\[
 U_a(0,m)
 \ge \frac1{|I|}\int_I V_{\varepsilon,a}(h_n(x))\,dx,
\]
where
\[
 V_{\varepsilon,a}(t)=
 \begin{cases}
  0, & t\le a,\\
  (t-a)/\varepsilon, & a<t<a+\varepsilon,\\
  1, & t\ge a+\varepsilon.
 \end{cases}
\]
Indeed, $U_a(t,t)\ge V_{\varepsilon,a}(t)$ for every $t$.  By the Lebesgue
differentiation theorem, $h_n\to h$ almost everywhere.  Dominated convergence
and then $\varepsilon\downarrow0$ prove \eqref{eq:bellman-tail} when the
$\BLO$ seminorm is strictly smaller than $1$.

{Finally, suppose that $\norm{h}_{\BLO(I)}\le1$.  Since $h_I=0$, we have
$m=\essinf_Ih\ge-1$, so $(0,m)\in\overline{\mathcal D}$.  Apply the strict
case to
$\kappa h$ at the level $\kappa a$, where $0<\kappa<1$, and let
$\kappa\uparrow1$.  The left-hand distribution set is unchanged, and the
explicit formula~\eqref{eq:bellman-candidate} gives
\[
 U_{\kappa a}(0,\kappa m)\longrightarrow U_a(0,m).
\]
This proves \eqref{eq:bellman-tail}, including the top-boundary case $m=-1$.}
\end{proof}

The evaluation of the Bellman function at the initial state gives the promised
refinement.

\begin{theorem}[Bellman refinement]\label{thm:bellman-refinement}
Let $I_0$ be a bounded interval, let $f\in\BLO(I_0)$, and set
$B=\norm{f}_{\BLO(I_0)}>0$.  For a subinterval $I\subseteq I_0$, put
\[
 x_I=\frac{f_I-\essinf_I f}{B}\in[0,1],
 \qquad L=\frac{\lambda}{B}.
\]
Then
\begin{equation}\label{eq:bellman-refined-bound}
 \frac1{|I|}
 \left|\left\{x\in I:f(x)-\essinf_I f>\lambda\right\}\right|
 \le \mathcal B(x_I,L),
\end{equation}
where
\begin{equation*}\label{eq:bellman-value}
 \mathcal B(x,L)=\min\left\{1,\frac{x}{q(L)}\right\}
 =
 \begin{cases}
  1, & 0<L\le x,\\
  x/L, & x<L\le1,\\
  xe^{1-L}, & L\ge1.
 \end{cases}
\end{equation*}
At the common endpoints, the adjacent formulas agree.
\end{theorem}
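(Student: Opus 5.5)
The plan is to reduce the statement to Lemma~\ref{lem:bellman-induction} by centering and normalizing, and then evaluate $U_a$ at the initial point. Fix $I\subseteq I_0$ and $\lambda>0$, and set
\[
 h=\frac{f-f_I}{B}\quad\text{on }I .
\]
Then $h_I=0$. By Propositions~\ref{prop:affine} and~\ref{prop:restriction}, $\norm{h}_{\BLO(I)}=\norm{f}_{\BLO(I)}/B\le1$. Moreover
\[
 m:=\essinf_I h=\frac{\essinf_I f-f_I}{B}=-x_I .
\]
In particular $x_I\in[0,1]$: nonnegativity is trivial, and $x_I\le1$ is the definition of $B$ applied to the interval $I$. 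The event also transforms cleanly:
\[
 f-\essinf_I f>\lambda \iff h>a, \qquad a:=\frac{\lambda+\essinf_I f-f_I}{B}=L-x_I .
\]
Hence Lemma~\ref{lem:bellman-induction} gives
\[
 \frac1{|I|}\bigl|\{x\in I: f(x)-\essinf_I f>\lambda\}\bigr|\le U_{L-x_I}(0,-x_I).
\]

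It remains to compute $U_{L-x_I}(0,-x_I)$ from~\eqref{eq:bellman-candidate}. Here $a-y=L>0$, so $a>y$ always, and the second branch applies with $z-y=x_I$ and $q(a-y)=q(L)$. This yields $\min\{1,x_I/q(L)\}=\mathcal B(x_I,L)$.

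Finally, I check that the piecewise form matches $\min\{1,x/q(L)\}$. For $L\le1$, $q(L)=L$, so the value is $1$ when $L\le x$ and $x/L$ when $x<L\le1$. For $L\ge1$, the value is $\min\{1,xe^{1-L}\}=xe^{1-L}$, because $x\le1\le e^{L-1}$. At $L=x$ both expressions equal $1$, and at $L=1$ both equal $x$, so the adjacent formulas agree at the common endpoints.

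The case $x_I=0$ is harmless: then $f=\essinf_I f$ a.e.\ on $I$, and the bound $0$ is consistent with the formula. I expect no genuine obstacle, since all the analytic work is contained in Lemma~\ref{lem:bellman-induction}. The only point needing care is the bookkeeping: the variable $a$ in that lemma must be the level shifted by $x_I$, not $L$ itself.
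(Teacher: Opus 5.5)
Your proposal is correct and follows essentially the same route as the paper: your $h=(f-f_I)/B$ is exactly the paper's $g-x_I$, and you then apply Lemma~\ref{lem:bellman-induction} at level $a=L-x_I$ and evaluate $U_{L-x_I}(0,-x_I)=\min\{1,x_I/q(L)\}$. Your extra checks, namely the piecewise form, agreement at the endpoints, and the degenerate case $x_I=0$, are fine.
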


\begin{proof}
Fix $I\subseteq I_0$ and write $s=\essinf_I f$.  On $I$, define
\[
 g=\frac{f-s}{B},
 \qquad h=g-x_I.
\]
Then
\[
 h_I=0,
 \qquad \essinf_Ih=-x_I,
 \qquad \norm{h}_{\BLO(I)}\le1.
\]
Moreover,
\[
 \{f-s>\lambda\}=\{h>L-x_I\}.
\]
Lemma~\ref{lem:bellman-induction}, with $a=L-x_I$, therefore gives
\[
 \frac1{|I|}\left|\{f-s>\lambda\}\right|
 \le U_{L-x_I}(0,-x_I)
 =\min\left\{1,\frac{x_I}{q(L)}\right\},
\]
which is \eqref{eq:bellman-refined-bound}.
\end{proof}

For $0<L\le1$, \eqref{eq:bellman-refined-bound} is at most $1$.  For
$L\ge1$, it is at most $e^{1-L}$ because $x_I\le1$.  Hence
\[
 \mathcal B(x_I,L)\le\min\{1,e^{1-L}\},
\]
and Theorem~\ref{thm:bellman-refinement} gives a second proof of
Theorem~\ref{thm:upper}.

\section{Sharpness}\label{sec:sharpness}
{In this section, we prove the sharpness assertions in
Theorem~\ref{thm:main}.  We first show that the exponential rate cannot
exceed $1$.}

\begin{lemma}\label{lem:logarithm}
Let $I_0=(0,1)$ and
\[
 f(x)=-\log x.
\]
Then
\[
 \norm{f}_{\BLO(I_0)}=1,
 \qquad
 \essinf_{I_0}f=0,
\]
and, for every $\lambda>0$,
\begin{equation}\label{eq:log-tail}
 |\{x\in(0,1):f(x)>\lambda\}|=e^{-\lambda}.
\end{equation}
\end{lemma}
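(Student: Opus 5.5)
The proof is a direct computation on subintervals. The tail identity \eqref{eq:log-tail} and the essential infimum are immediate. Since $f$ is decreasing on $(0,1)$ with $f(x)\to0$ as $x\to1^-$, we have $\essinf_{(0,1)}f=0$. Moreover $\{x\in(0,1):-\log x>\lambda\}=(0,e^{-\lambda})$, which has measure $e^{-\lambda}$. The real content is computing $\norm{f}_{\BLO(I_0)}$.

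For the seminorm, fix a subinterval $I=(s,t)\subseteq(0,1)$ with $0\le s<t\le1$. Because $f$ is decreasing, $\essinf_I f=-\log t$, and therefore
\[
 f_I-\essinf_I f=\frac1{t-s}\int_s^t\log\frac{t}{x}\,dx .
\]
Substituting $x=tu$ turns this into $\frac{1}{1-r}\int_r^1(-\log u)\,du$ with $r=s/t\in[0,1)$. The expression depends only on the ratio $r$, which reflects the dilation invariance of $-\log x$ modulo constants. For $r=0$ the value is $\int_0^1(-\log u)\,du=1$. This already gives $\norm{f}_{\BLO(I_0)}\ge1$, attained for example on $I=(0,1)$ itself.

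For the upper bound, set $\varphi(r)=\frac{1}{1-r}\int_r^1(-\log u)\,du$ and show $\varphi(r)\le1$ for all $r\in[0,1)$. I would argue by averaging. The quantity $\varphi(r)$ is the mean of the decreasing function $-\log u$ over $(r,1)$. Removing the initial piece $(0,r)$, where $-\log u$ takes its largest values, can only lower the average, so $\varphi(r)\le\varphi(0)=1$.

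Concretely, $\int_r^1(-\log u)\,du=1-r+r\log r$, so $\varphi(r)=1+\frac{r\log r}{1-r}\le1$ because $r\log r\le0$. Hence $\norm{f}_{\BLO(I_0)}=1$.

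I expect no real obstacle here. The only points needing care are that the supremum includes intervals touching $0$, where $f$ is unbounded but integrable, and the observation that the oscillation depends only on $s/t$.
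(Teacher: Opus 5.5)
Your proposal is correct and follows essentially the same route as the paper. Both compute $f_{(s,t)}-\essinf_{(s,t)}f$ directly for the decreasing function $-\log x$ and get $1+\frac{r\log r}{1-r}$ with $r=s/t$, which is exactly the paper's expression $1-\frac{a\log(b/a)}{b-a}$; this is at most $1$, with equality at $r=0$. Your substitution $x=tu$ only makes the dependence on $s/t$ explicit and handles the endpoint $s=0$ directly instead of by a limit.
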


\begin{proof}
For $0<a<b\le1$,
\[
 \frac1{b-a}\int_a^b(-\log x)\,dx
 =1-\frac{b\log b-a\log a}{b-a}.
\]
Since $f$ is decreasing,
$\essinf_{(a,b)}f=-\log b$, and therefore
\begin{equation*}\label{eq:log-blo-computation}
 f_{(a,b)}-\essinf_{(a,b)}f
 =1-\frac{a\log(b/a)}{b-a}
 \le1.
\end{equation*}
The same formula holds at $a=0$ by taking a limit, and then the value is
$1$.  Hence $\norm{f}_{\BLO((0,1))}=1$.  Finally,
$-\log x>\lambda$ if and only if $x<e^{-\lambda}$, proving
\eqref{eq:log-tail}.
\end{proof}
{This yields the sharp exponential rate $c_2^*$.}
\begin{corollary}\label{cor:rate-sharp}
If \eqref{eq:JN-BLO} holds with a finite constant $c_1$, then $c_2\le1$.
Thus $c_2^*=1$.
\end{corollary}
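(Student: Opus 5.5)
The plan is to test \eqref{eq:JN-BLO} on the extremal function of Lemma~\ref{lem:logarithm}. Take $I_0=I=(0,1)$ and $f(x)=-\log x$. Lemma~\ref{lem:logarithm} gives $\norm{f}_{\BLO(I_0)}=1$ and $\essinf_{I}f=0$. The left-hand side of \eqref{eq:JN-BLO} is a supremum over $I\subseteq I_0$, so it is at least the value at $I=I_0$, which by \eqref{eq:log-tail} equals $e^{-\lambda}$. Hence, if \eqref{eq:JN-BLO} holds with constants $c_1<\infty$ and $c_2$, then
\[
 e^{-\lambda}\le c_1e^{-c_2\lambda}\qquad\text{for all }\lambda>0.
\]
Equivalently, $e^{(c_2-1)\lambda}\le c_1$ for all $\lambda>0$.

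Suppose $c_2>1$. The left side then tends to $+\infty$ as $\lambda\to\infty$, which contradicts the finiteness of $c_1$. Therefore $c_2\le1$ for every admissible pair. Theorem~\ref{thm:upper} shows that $(c_1,c_2)=(e,1)$ is admissible, so $c_2=1$ is attained. The supremum defining $c_2^*$ is therefore exactly $1$.

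There is no real obstacle here; the work was done in Lemma~\ref{lem:logarithm}. The only points to state carefully are two. First, the convention defining $c_2^*$ requires a finite $c_1$, and this is what the limit $\lambda\to\infty$ uses. Second, the normalization $\norm{f}_{\BLO(I_0)}=1$ makes the exponent on the right-hand side exactly $c_2\lambda$.
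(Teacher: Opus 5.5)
Your proposal is correct and follows essentially the same argument as the paper. You test the inequality on $-\log x$ from Lemma~\ref{lem:logarithm} to get $e^{(c_2-1)\lambda}\le c_1$ for all $\lambda>0$, which forces $c_2\le1$, and you invoke Theorem~\ref{thm:upper} for admissibility of $(e,1)$.
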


\begin{proof}
Apply \eqref{eq:JN-BLO} to the function in Lemma~\ref{lem:logarithm}.  We get
\[
 e^{-\lambda}\le c_1e^{-c_2\lambda}
 \qquad(\lambda>0).
\]
Equivalently, $c_1\ge e^{(c_2-1)\lambda}$ for every $\lambda>0$.
If $c_2>1$, the right-hand side tends to infinity as
$\lambda\to\infty$, a contradiction.  The reverse inequality
$c_2^*\ge1$ follows from Theorem~\ref{thm:upper}.
\end{proof}

We next determine the smallest prefactor at the sharp rate.

\begin{lemma}\label{lem:indicator}
Fix $0<p<1$ and define on $I_0=(0,1)$
\[
 h_p=\mathbf 1_{(0,p)}.
\]
Then
\[
 \norm{h_p}_{\BLO(I_0)}=1,
 \qquad
 \essinf_{I_0}h_p=0.
\]
Moreover, for every $0<\lambda<1$,
\[
 |\{x\in(0,1):h_p(x)>\lambda\}|=p.
\]
\end{lemma}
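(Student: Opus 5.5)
We need to show three things for $h_p=\mathbf 1_{(0,p)}$ on $(0,1)$: the seminorm $\norm{h_p}_{\BLO(I_0)}$ equals $1$, the essential infimum over $I_0$ is $0$, and the distribution identity holds. The last two are immediate. Since $h_p$ takes only the values $0$ and $1$, and the set $\{h_p=0\}=[p,1)$ has positive measure, we get $\essinf_{I_0}h_p=0$. For $0<\lambda<1$, the inequality $h_p(x)>\lambda$ holds exactly when $x\in(0,p)$, so the distribution set has measure $p$.

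For the seminorm, I would first prove the upper bound $\norm{h_p}_{\BLO(I_0)}\le1$. Fix a subinterval $I\subseteq(0,1)$ and split into two cases.
\begin{itemize}
\item If $I\subseteq(0,p]$ or $I\subseteq[p,1)$, then $h_p$ is a.e.\ constant on $I$, so $(h_p)_I-\essinf_I h_p=0$.
\item Otherwise $I$ meets both $(0,p)$ and $(p,1)$ in sets of positive length. Then $\essinf_I h_p=0$, and $(h_p)_I=|I\cap(0,p)|/|I|\le1$.
\end{itemize}
In both cases $(h_p)_I-\essinf_I h_p\le1$.

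For the lower bound $\norm{h_p}_{\BLO(I_0)}\ge1$, I would take the intervals $I_\varepsilon=(p-\varepsilon,p+\varepsilon')$ with $\varepsilon\le p$ fixed and $\varepsilon'\downarrow0$. On these, the essential infimum is $0$ and the average is $\varepsilon/(\varepsilon+\varepsilon')\to1$. So the supremum equals $1$, although it is not attained.

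There is no real obstacle here. The only point needing care is that the essential infimum on a straddling interval is $0$ rather than $1$, because the right piece $I\cap(p,1)$ has positive measure. This is what makes the lower oscillation approach $1$ rather than $0$. For context, the lemma will then be used with $\lambda\uparrow1$, where the tail $p\to1$ gives $c_1\ge e^{1-\lambda}\cdot$, forcing $c_1\ge e$ as $p\to1$.
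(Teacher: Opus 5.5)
Your proof is correct and is essentially the paper's argument: the upper bound comes from $0\le h_p\le 1$ (your case split just makes this explicit), and the lower bound uses intervals straddling $p$ whose right endpoint shrinks to $p$, on which the essential infimum is $0$ and the average tends to $1$. The only slip is in your closing contextual remark, which is garbled: the intended chain is $p\le c_1e^{-\lambda}$, hence $c_1\ge pe^{\lambda}\to pe$ as $\lambda\uparrow 1$, and then $p\uparrow 1$ gives $c_1\ge e$.
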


\begin{proof}
On any interval $J\subseteq(0,1)$, the function $h_p$ takes only the values
$0$ and $1$. So
\[
 (h_p)_J-\essinf_Jh_p\le1.
\]
{On intervals of the form $(a,p+\varepsilon)$ with
$0<a<p$ and $0<\varepsilon<1-p$, the essential infimum is zero and the average
can be made arbitrarily close to $1$ by taking $a\downarrow0$ and
$\varepsilon\downarrow0$.  Hence the supremum equals $1$.}
The distribution statement is immediate.
\end{proof}

\begin{corollary}\label{cor:prefactor-sharp}
At the sharp exponential rate $c_2^*=1$, every admissible prefactor satisfies
$c_1\ge e$.  Therefore $c_1^*=e$.
\end{corollary}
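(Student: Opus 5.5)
We will show that if \eqref{eq:JN-BLO} holds with $c_2=1$ and some prefactor $c_1$, then necessarily $c_1\ge e$. The upper bound $c_1^*\le e$ is already Theorem~\ref{thm:upper}, so this gives $c_1^*=e$. The test functions will be the indicators $h_p=\mathbf 1_{(0,p)}$ of Lemma~\ref{lem:indicator}, with $I=I_0=(0,1)$.

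By Lemma~\ref{lem:indicator}, $\norm{h_p}_{\BLO(I_0)}=1$ and $\essinf_{I_0}h_p=0$. Hence, for $0<\lambda<1$, the left-hand side of \eqref{eq:JN-BLO} is at least
\[
 \abs{\{x\in(0,1):h_p(x)>\lambda\}}=p .
\]
So admissibility of $c_1$ at rate $c_2=1$ forces
\[
 p\le c_1e^{-\lambda}\qquad\text{for all }0<p<1,\ 0<\lambda<1,
\]
that is, $c_1\ge p\,e^{\lambda}$.

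Letting $p\uparrow1$ and $\lambda\uparrow1$ gives $c_1\ge e$. The limits are independent: for any $\varepsilon>0$, choose $p>1-\varepsilon$ and $\lambda>1-\varepsilon$, which yields $c_1\ge(1-\varepsilon)e^{1-\varepsilon}$. Since $\varepsilon$ is arbitrary, $c_1\ge e$.

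The only delicate point is that the indicator construction must keep the seminorm exactly $1$ while making the superlevel set almost the whole interval. This is already handled in Lemma~\ref{lem:indicator}. The one thing to note is that the bound must be applied for $\lambda$ strictly below $1$, because at $\lambda\ge1$ the set $\{h_p>\lambda\}$ is empty. This is why the extremal value $e$ is approached but not attained.
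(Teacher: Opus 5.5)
Your proposal is correct and follows the paper's proof essentially verbatim. You apply \eqref{eq:JN-BLO} with $c_2=1$ to $h_p=\mathbf 1_{(0,p)}$ on $I=I_0=(0,1)$ to get $p\le c_1e^{-\lambda}$ for $0<\lambda<1$, let $\lambda\uparrow1$ and $p\uparrow1$ to obtain $c_1\ge e$, and invoke Theorem~\ref{thm:upper} for the matching bound $c_1^*\le e$.
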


\begin{proof}
Apply \eqref{eq:JN-BLO} with $c_2=1$ to $h_p$.  For every
$0<\lambda<1$,
\[
 p\le c_1e^{-\lambda}.
\]
Letting $\lambda\uparrow1$ gives $c_1\ge pe$, and then letting $p\uparrow1$
gives $c_1\ge e$.  The upper bound $c_1^*\le e$ follows from
Theorem~\ref{thm:upper}.
\end{proof}

Corollaries~\ref{cor:rate-sharp} and~\ref{cor:prefactor-sharp} complete the
proof of Theorem~\ref{thm:main}.

\begin{remark}\label{rem:failed-alpha}
The logarithmic example also provides a useful consistency check for any
proposed one-sided inequality.  Since $-\log x$ has $\BLO$ seminorm $1$ and exact
tail $e^{-\lambda}$ on $(0,1)$, an exponential rate strictly larger than $1$
is impossible, regardless of the prefactor.
\end{remark}

\section{Applications}\label{sec:applications}

{In this section we record several direct consequences of the sharp
distribution estimate.  Besides exponential integrability and quantitative
$L^p$ bounds, the estimate provides a connection between $\BLO$ and the
Muckenhoupt class $A_1$.  Logarithms of $A_1$ weights give a classical
characterization of $\BLO$; see \cite{CR80}.  Sharp one-dimensional
$\BLO_p$--$\BLO$ and exponential $A_1$ estimates were developed by
Passarelli di Napoli~\cite{Passarelli96}, while Popoli~\cite{Popoli25}
identified the exact small-seminorm threshold for exponentiation into
$A_1$.  Accordingly, the results below are stated as corollaries of
Theorem~\ref{thm:main}; apart from the explicitly proved exponent range and
large-$p$ asymptotic, no claim of new optimal constants is made.}

\subsection{Exponential integrability and Muckenhoupt weights}

For a positive locally integrable function $w$ on $I_0$, define its localized
$A_1$ constant by
\[
 [w]_{A_1(I_0)}
 =\sup_{I\subseteq I_0}
 \frac{w_I}{\essinf_I w}.
\]

\begin{corollary}\label{prop:exp-integrability}
Let $f\in\BLO(I_0)$ and set $B=\norm{f}_{\BLO(I_0)}>0$.  For every
$0<\alpha<1$ and every interval $I\subseteq I_0$,
\begin{equation}\label{eq:exp-integrability}
 \frac1{|I|}\int_I
 \exp\left(\alpha\frac{f(x)-\essinf_I f}{B}\right)\,dx
 \le \frac{e^\alpha}{1-\alpha}.
\end{equation}
Consequently, the weight
\[
 w_\alpha=\exp\left(\frac{\alpha f}{B}\right)
\]
belongs to $A_1(I_0)$ and satisfies
\begin{equation}\label{eq:A1-constant}
 [w_\alpha]_{A_1(I_0)}\le \frac{e^\alpha}{1-\alpha}.
\end{equation}
The range $0<\alpha<1$ is optimal uniformly over the unit ball of
$\BLO(I_0)$.
\end{corollary}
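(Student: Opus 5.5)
Fix an interval $I\subseteq I_0$ and set $g=(f-\essinf_I f)/B$ on $I$, so that $g\ge0$ a.e. Write $\mu(t)=|I|^{-1}|\{x\in I:g(x)>t\}|$. Theorem~\ref{thm:upper}, applied at level $\lambda=Bt$, gives $\mu(t)\le\min\{1,e^{1-t}\}$. By the layer-cake formula,
\[
 \frac1{|I|}\int_I e^{\alpha g}\,dx
 =1+\int_0^\infty \alpha e^{\alpha t}\mu(t)\,dt
 \le 1+\int_0^1\alpha e^{\alpha t}\,dt+\int_1^\infty \alpha e^{\alpha t}e^{1-t}\,dt.
\]
The first integral equals $e^\alpha-1$. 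The second equals $\alpha e\cdot e^{\alpha-1}/(1-\alpha)=\alpha e^\alpha/(1-\alpha)$, and it converges precisely because $\alpha<1$. Adding these up gives
\[
 e^\alpha+\frac{\alpha e^\alpha}{1-\alpha}=\frac{e^\alpha}{1-\alpha},
\]
which is \eqref{eq:exp-integrability}.

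For the $A_1$ bound, we use $\essinf_I w_\alpha=\exp(\alpha\essinf_I f/B)$, since $\exp$ is increasing and continuous. Then
\[
 \frac{(w_\alpha)_I}{\essinf_I w_\alpha}=\frac1{|I|}\int_I \exp\Bigl(\alpha\frac{f-\essinf_I f}{B}\Bigr)\,dx,
\]
and \eqref{eq:A1-constant} follows from \eqref{eq:exp-integrability} by taking the supremum over $I$. Two small points need checking. First, $\essinf_I f>-\infty$, which holds because $f_I-\essinf_I f\le B<\infty$. Second, $w_\alpha$ is positive and locally integrable, which the bound itself supplies.

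For optimality of the range, we use the extremal function of Lemma~\ref{lem:logarithm}: $f=-\log x$ on $(0,1)$, with $B=1$ and $\essinf f=0$. Then $\int_0^1 e^{\alpha(-\log x)}\,dx=\int_0^1 x^{-\alpha}\,dx$, which is infinite for $\alpha\ge1$. So no uniform bound, or even finiteness, can hold for $\alpha\ge1$ over the unit ball of $\BLO(I_0)$. For a general bounded $I_0$, we transport this example by an affine change of variables; by Proposition~\ref{prop:affine} and the translation and dilation invariance of the seminorm, the seminorm stays $1$. Equivalently, $w_\alpha=x^{-\alpha}\notin L^1$, so $w_\alpha\notin A_1$.

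None of these steps is a real obstacle. The only care needed is to interpret ``optimal'' correctly: the claim is that $\alpha\ge1$ fails, which the logarithm shows, not that the constant $e^\alpha/(1-\alpha)$ is sharp. The layer-cake computation only has to respect the split at $t=1$ coming from the $\min$.
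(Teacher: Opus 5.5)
Your proof is correct and follows the paper's argument essentially line for line: you use the layer-cake bound split at $t=1$ via Theorem~\ref{thm:upper}, the identity $\essinf_I w_\alpha=\exp(\alpha\essinf_I f/B)$ for the $A_1$ constant, and the example $-\log x$ on $(0,1)$ for optimality of the range. One minor correction: moving the example to a general $I_0$ relies on invariance of the seminorm under affine changes of the variable $x$, not on Proposition~\ref{prop:affine}, which concerns $af+b$; the paper avoids the issue by simply taking $I_0=(0,1)$.
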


\begin{proof}
Fix $I\subseteq I_0$, put $m_I=\essinf_I f$, and set
\[
 X=\frac{f-m_I}{B}.
\]
Then $X\ge0$, and Theorem~\ref{thm:upper} gives
\[
 \frac1{|I|}\abs{\{x\in I:X(x)>t\}}
 \le \min\{1,e^{1-t}\},\qquad t>0.
\]
The layer-cake formula and Tonelli's theorem therefore yield
\begin{align*}
 \frac1{|I|}\int_I e^{\alpha X(x)}\,dx
 &=1+\alpha\int_0^\infty e^{\alpha t}
   \frac{\abs{\{x\in I:X(x)>t\}}}{|I|}\,dt\\
 &\le 1+\alpha\int_0^1e^{\alpha t}\,dt
       +\alpha e\int_1^\infty e^{-(1-\alpha)t}\,dt\\
 &=\frac{e^\alpha}{1-\alpha}.
\end{align*}
This proves \eqref{eq:exp-integrability}.  Since the exponential function is
increasing,
\[
 \essinf_I w_\alpha
 =\exp\left(\frac{\alpha m_I}{B}\right).
\]
Multiplying \eqref{eq:exp-integrability} by this quantity and then taking the
supremum over $I\subseteq I_0$ proves \eqref{eq:A1-constant}.

To see that the exponent range cannot be enlarged, take
$I_0=(0,1)$ and $f(x)=-\log x$.  Lemma~\ref{lem:logarithm} gives
$\norm{f}_{\BLO(I_0)}=1$, whereas
\[
 \int_0^1 e^{\alpha f(x)}\,dx
 =\int_0^1x^{-\alpha}\,dx
\]
is infinite for $\alpha\ge1$.  Thus neither uniform exponential integrability nor a uniform \(A_1\) conclusion is possible at or beyond
$\alpha=1$.
\end{proof}

{Corollary~\ref{prop:exp-integrability} records the consequence of
Theorem~\ref{thm:main} for the $\BLO$--$A_1$ correspondence: the sharp
exponential rate yields the uniform threshold for exponentiating normalized
$\BLO$ functions.  The estimate \eqref{eq:A1-constant} is the bound obtained
from the distribution estimate; in view of the sharper literature cited
above, we do not claim that its prefactor is optimal or new.}

\subsection{Quantitative moment estimates}

Let
\[
 \Gamma(s,a)=\int_a^\infty t^{s-1}e^{-t}\,dt
\]
denote the upper incomplete gamma function.

\begin{corollary}\label{prop:Lp-application}
Let $f\in\BLO(I_0)$ and $B=\norm{f}_{\BLO(I_0)}$.  For every $p>0$ and
every interval $I\subseteq I_0$,
\begin{equation}\label{eq:Lp-application}
 \left(\frac1{|I|}\int_I
 \bigl(f(x)-\essinf_I f\bigr)^p\,dx\right)^{1/p}
 \le \bigl(e\Gamma(p+1,1)\bigr)^{1/p}B.
\end{equation}
Moreover,
\begin{equation}\label{eq:Lp-asymptotic}
 \bigl(e\Gamma(p+1,1)\bigr)^{1/p}\sim\frac{p}{e}
 \qquad (p\to\infty),
\end{equation}
and the coefficient $1/e$ in this asymptotic growth is optimal.
\end{corollary}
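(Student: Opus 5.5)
The plan has three parts: the moment bound via the layer-cake formula, the asymptotic via Laplace's method, and optimality via the logarithm example.

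\textbf{Moment bound.} If $B=0$ the left side of \eqref{eq:Lp-application} vanishes, so assume $B>0$. Fix $I\subseteq I_0$ and set $X=(f-\essinf_I f)/B\ge0$, exactly as in the proof of Corollary~\ref{prop:exp-integrability}. The layer-cake formula together with Theorem~\ref{thm:upper} gives
\[
 \frac1{|I|}\int_I X^p
 =p\int_0^\infty t^{p-1}\frac{|\{X>t\}|}{|I|}\,dt
 \le p\int_0^1t^{p-1}\,dt+pe\int_1^\infty t^{p-1}e^{-t}\,dt
 =1+pe\,\Gamma(p,1).
\]
Integrating by parts, $\Gamma(p+1,1)=e^{-1}+p\Gamma(p,1)$, so $1+pe\Gamma(p,1)=e\Gamma(p+1,1)$. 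Multiplying by $B^p$ and taking $p$-th roots yields \eqref{eq:Lp-application}.

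\textbf{Asymptotic.} Since $\Gamma(p+1)-1\le\Gamma(p+1)-\Gamma(p+1,1)=\int_0^1t^pe^{-t}\,dt\le1$, we have
\[
 \Gamma(p+1,1)=\Gamma(p+1)\bigl(1+o(1)\bigr),
\]
where the correction is bounded while $\Gamma(p+1)\to\infty$. Stirling's formula gives $\Gamma(p+1)^{1/p}\sim p/e$. The remaining factors $e^{1/p}$ and $(1+o(1))^{1/p}$ both tend to $1$, which proves \eqref{eq:Lp-asymptotic}.

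\textbf{Optimality of $1/e$.} Take $f=-\log x$ on $(0,1)$. By Lemma~\ref{lem:logarithm}, $B=1$ and $\essinf f=0$, and
\[
 \int_0^1(-\log x)^p\,dx=\Gamma(p+1).
\]
So for this $f$ the left side of \eqref{eq:Lp-application} equals $\Gamma(p+1)^{1/p}\sim p/e$. Hence no bound of the form $(c+o(1))\,p\,B$ with $c<1/e$ can hold for all $f$ and $I$, and the coefficient $1/e$ is optimal.

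The only delicate step is the optimality claim, specifically stating precisely what "coefficient in the asymptotic growth" means. I would phrase it as follows: if $C(p)$ is any constant for which \eqref{eq:Lp-application} holds with $C(p)B$ on the right, then $\liminf_{p\to\infty} C(p)/p\ge 1/e$. The example above gives exactly this lower bound. The other steps are routine calculus.
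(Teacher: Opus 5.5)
Your proposal is correct and follows the paper's proof step for step: the layer-cake formula with the tail bound of Theorem~\ref{thm:upper}, the identity $\Gamma(p+1,1)=e^{-1}+p\Gamma(p,1)$ from integration by parts, Stirling's formula, and the example $-\log x$ from Lemma~\ref{lem:logarithm}. One slip needs fixing: the chain $\Gamma(p+1)-1\le\Gamma(p+1)-\Gamma(p+1,1)\le 1$ is misstated, since it would force $\Gamma(p+1)\le 2$; what you mean is $0\le\Gamma(p+1)-\Gamma(p+1,1)=\int_0^1t^pe^{-t}\,dt\le1$, i.e.\ $\Gamma(p+1)-1\le\Gamma(p+1,1)\le\Gamma(p+1)$, which gives $\Gamma(p+1,1)/\Gamma(p+1)\to1$ as you intended.
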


\begin{proof}
The assertion is immediate when $B=0$.  Suppose that $B>0$, fix
$I\subseteq I_0$, and again put
$X=(f-\essinf_I f)/B$.  Integrating the distribution function gives
\begin{align*}
 \frac1{|I|}\int_I X(x)^p\,dx
 &=p\int_0^\infty t^{p-1}
   \frac{\abs{\{x\in I:X(x)>t\}}}{|I|}\,dt\\
 &\le p\int_0^1t^{p-1}\,dt
       +ep\int_1^\infty t^{p-1}e^{-t}\,dt\\
 &=1+ep\Gamma(p,1)
 =e\Gamma(p+1,1),
\end{align*}
where the last equality follows by integration by parts.  This proves
\eqref{eq:Lp-application}.  Formula \eqref{eq:Lp-asymptotic} follows from
Stirling's formula.  Finally, for the logarithmic example of
Lemma~\ref{lem:logarithm},
\[
 \left(\int_0^1(-\log x)^p\,dx\right)^{1/p}
 =\Gamma(p+1)^{1/p}\sim\frac{p}{e}.
\]
Hence the asymptotic coefficient cannot be decreased.
\end{proof}

{Thus Theorem~\ref{thm:main} controls all positive moments of the
one-sided oscillation.  Corollary~\ref{prop:Lp-application} is included to
record this consequence; its constant is not asserted to be optimal for each
fixed $p$, although its large-$p$ growth is sharp.}

\subsection{{Outputs of global BMO-to-BLO estimates}}

{The sharp estimate also applies to mappings whose endpoint range is
$\BLO$.  To avoid imposing an unproved localization property, we formulate
the observation using global seminorms.  Suppose that a mapping $T$ on
functions on $\mathbb R$ satisfies
\[
 \norm{Th}_{\BLO(\mathbb R)}
 \le C_T\norm{h}_{\BMO(\mathbb R)}.
\]
Since the localized $\BLO$ seminorm on every bounded interval $I_0$ is at
most the global seminorm, Theorem~\ref{thm:upper} gives, for every bounded
interval $I\subset\mathbb R$ and every $\lambda>0$,
\begin{equation}\label{eq:operator-application}
 \frac1{|I|}\abs{\left\{x\in I:
 Th(x)-\essinf_I Th>\lambda\right\}}
 \le
 \min\left\{1,
 \exp\left(1-
 \frac{\lambda}{C_T\norm{h}_{\BMO(\mathbb R)}}\right)\right\},
\end{equation}
with the usual interpretation when the denominator is zero.  Thus a global
$\BMO$-to-$\BLO$ estimate yields one-sided exponential concentration on
every bounded interval.  Bennett's theorem for nonabsolute maximal operator \(Mh(x)=\sup_{I\ni x}h_I\) is a classical example, subject to the usual finiteness hypothesis;
see \cite{Bennett82}.

The scale of the endpoint estimate must be retained when this observation is
applied.  In the Littlewood--Paley setting of \cite{LYZ}, Corollary~2.6 gives
an estimate of the form
\[
 \norm{g(h)}_{\BLO(\mathbb R)}
 \le C\norm{h}_{\BMO(\mathbb R)},
\]
so \eqref{eq:operator-application} applies to $T(h)=g(h)$ with the linear
scale $C\norm{h}_{\BMO}$.  By contrast, Theorem~2.4 of the same reference
gives
\[
 \norm{[g(h)]^2}_{\BLO(\mathbb R)}
 \le C\norm{h}_{\BMO(\mathbb R)}^2.
\]
For the output $[g(h)]^2$, the denominator in the exponential tail is
therefore $C\norm{h}_{\BMO}^2$, not $C\norm{h}_{\BMO}$.  Results for
differential transforms can be treated in the same way after inserting the
corresponding global $\BMO$-to-$\BLO$ bound; see \cite{YZ26}.}


\begingroup

\begin{thebibliography}{99}

\bibitem{Bennett82}
C.~Bennett,
\emph{Another characterization of $\BLO$},
Proc. Amer. Math. Soc. \textbf{85} (1982), no.~4, 552--556.

\bibitem{CR80}
R.~R.~Coifman and R.~Rochberg,
\emph{Another characterization of $\BMO$},
Proc.\ Amer.\ Math.\ Soc.\ \textbf{79} (1980), no.~2, 249--254.



\bibitem{YZ26}
W. Hu, K. Wu, D. Yang, and C. Zhang,
\emph{Boundedness of variation, oscillation, and differential transform on $\BMO$ space},
Michigan Math. J. \textbf{76} (2026), 339--364.

\bibitem{JN61}
F.~John and L.~Nirenberg,
\emph{On functions of bounded mean oscillation},
Comm.\ Pure Appl.\ Math.\ \textbf{14} (1961), 415--426.

\bibitem{Kore}
A.~A.~Korenovskii,
\emph{The connection between mean oscillations and exact exponents of summability of functions},
Math.\ USSR-Sb.\ \textbf{71} (1992), no.~2, 561--567;
translated from Mat.\ Sb.\ \textbf{181} (1990), no.~12, 1721--1727.

\bibitem{KLS05}
A.~A.~Korenovskyy, A.~K.~Lerner, and A.~M.~Stokolos,
\emph{On a multidimensional form of F. Riesz's ``rising sun'' lemma},
Proc. Amer. Math. Soc. \textbf{133} (2005), no.~5, 1437--1440.

\bibitem{Lerner13}
A.~K.~Lerner,
\emph{The John--Nirenberg inequality with sharp constants},
C.~R.\ Math.\ Acad.\ Sci.\ Paris \textbf{351} (2013), no.~11--12, 463--466.

\bibitem{LYZ}
S.~Liang, D.~Yang and C.~Zhang,
\emph{New characterizations of $\BLO$ spaces by heat semigroups and applications},
arXiv:2602.00479.

\bibitem{Osekowski}
A.~Os\c{e}kowski,
\emph{Sharp estimates for functions of bounded lower oscillation},
Bull.\ Austral.\ Math.\ Soc.\ \textbf{87} (2013), 68--81.

\bibitem{Passarelli96}
{A.~Passarelli di Napoli,
\emph{Sharp inequalities for $\BLO$ in one dimension},
Atti Sem. Mat. Fis. Univ. Modena \textbf{44} (1996), no.~2, 465--477.}

\bibitem{Popoli25}
{A.~Popoli,
\emph{Functions with small $\BMO$ norm},
Proc. Roy. Soc. Edinburgh Sect. A (2025), published online,
\href{https://doi.org/10.1017/prm.2024.141}{doi:10.1017/prm.2024.141}.}

\bibitem{Riesz32}
F.~Riesz,
\emph{Sur un th\'eor\`eme de maximum de MM. Hardy et Littlewood},
J. London Math. Soc. \textbf{7} (1932), no.~1, 10--13.

\bibitem{SV11}
L.~Slavin and V.~Vasyunin,
\emph{Sharp results in the integral-form John--Nirenberg inequality},
Trans.\ Amer.\ Math.\ Soc.\ \textbf{363} (2011), 4135--4169.

\bibitem{VV14}
V.~Vasyunin and A.~Volberg,
\emph{Sharp constants in the classical weak form of the John--Nirenberg inequality},
Proc.\ Lond.\ Math.\ Soc.\ \textbf{108} (2014), no.~6, 1417--1434.

\bibitem{WZT18}
D.~Wang, J.~Zhou, and Z.~Teng,
\emph{Some characterizations of $\BLO$ space},
Math.\ Nachr.\ \textbf{291} (2018), no.~11--12, 1908--1918.

\end{thebibliography}
\endgroup
\end{document}